\documentclass[a4paper]{amsart}
\usepackage{amsthm,amsfonts,amsmath,amssymb}
\usepackage[abs]{overpic}
\usepackage{thm-restate}

\newtheorem{theorem}{Theorem}[section]

\newtheorem{proposition}[theorem]{Proposition}

\newtheorem{claim}[theorem]{Claim}

\newtheorem{example}[theorem]{Example}

\newtheorem{remark}[theorem]{Remark}

\begin{document}
\title[DELTA-UNKNOTTING NUMBER FOR PRETZEL KNOTS]{DELTA-UNKNOTTING NUMBER FOR PRETZEL KNOTS}
\author{Kazumichi Nakamura}
\email{mi-ka-07130703@ezweb.ne.jp}





\begin{abstract}
The $\Delta$-unknotting number for a knot is defined as the minimum number of $\Delta$-moves needed to deform the knot into the trivial knot. It is known that, for positive pretzel knots, the $\Delta$-unknotting number coincides with the second coefficient of their Conway polynomial. In this paper, we compute the $\Delta$-unknotting number for positive pretzel knots.
As a consequence of the above result, among positive pretzel knots of odd type with a fixed crossing number $n$, where $n$ is odd, the ${\Delta}$-unknotting number is maximized by $P(1, 1, ... , 1) \quad \big( \cong T(2,n) \big)$, and the maximum value is $\frac{1}{8}(n^2 - 1)$.
We also obtain a similar result for torus knots.
We further determine the $\Delta$-unknotting number for pretzel knots of type $P(-1, p_2, ..., p_n)$, where $p_i$ is a positive odd integer for $2 \leq i \leq n$ and $n$ is odd.
\end{abstract}

\maketitle

\noindent\textbf{keywords:}$\Delta$-move, $\Delta$-unknotting number, Conway polynomial, pretzel knot, positive pretzel knot, torus knot, linking number.

\section{Introduction}
In this paper, we study the $\Delta$-unknotting numbers for pretzel knots.

In \cite{MaN}, H. Murakami and Y. Nakanishi introduced a local move on regular diagrams of oriented knots and links, called a $\Delta$-move (or $\Delta$-unknotting operation), as illustrated in Figure \ref{fig:delta}.
\begin{figure}[htbp]
    \centering
    \includegraphics[width=0.8\linewidth]{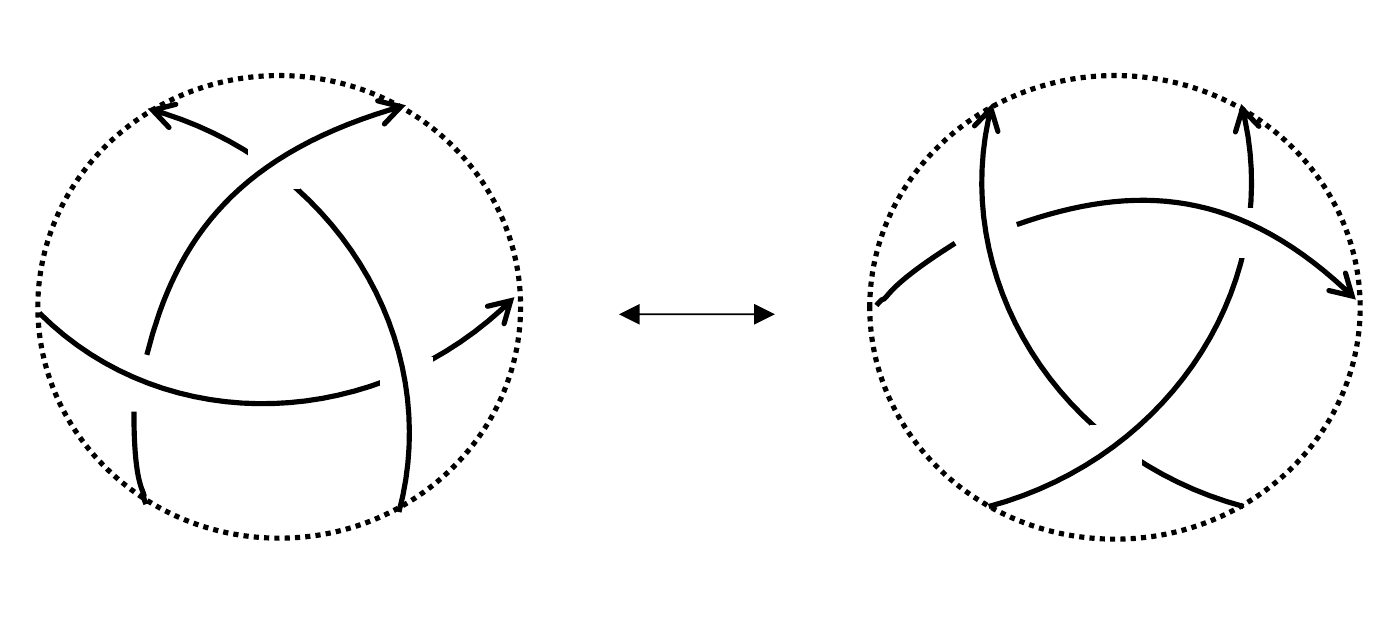}
    \caption{}
    \label{fig:delta}
\end{figure}
They proved that two knots can be deformed into each other by a finite sequence of $\Delta$-moves. 
The $\Delta$-Gordian distance $d_G^{\Delta}(K, K^{'})$ of two oriented knots $K$ and $K^{'}$ is defined as the minimum number of $\Delta$-moves needed to deform a diagram of $K$ into that of $K{'}$. 
The $\Delta$-unknotting number $u^{\Delta}(K)$  of an oriented knot $K$ is defined as the $\Delta$-Gordian distance $d_G^{\Delta}(K, O)$ of $K$ and the trivial knot $O$.
In \cite{Oka1}, M. Okada proved that $u^{\Delta}(K) \geq |a_2(K)|$, where $a_2(K)$ denotes the second coefficient of the Conway polynomial of $K$. 

In \cite{NaNaU}, joint work with Y. Nakanishi and Y. Uchida,
we determined the $\Delta$-unknotting numbers for certain classes of oriented knots: torus knots, positive pretzel knots, and positive $3$-braids.
For these classes, the $\Delta$-unknotting number coincides with the second coefficient of the Conway polynomial. In particular, the $\Delta$-unknotting number for $T(p,q)$ (the torus knot of type $(p,q)$) is equal to $(p^2-1)(q^2-1)/24$.

Subsequently, the $\Delta$-unknotting number for a subfamily of Turk's head knots was computed in \cite{NaYa}.
Recently, in \cite{Naka3}, we computed the $\Delta$-unknotting numbers for two-bridge knots of type
$C(2\beta_1, 2\beta_2, ... ,  2\beta_n)$ and type $C(2\beta_1, 2\beta_2, ... , 2\beta_{n-1}, 2\beta_n-1)$, where $\beta_i$ is a positive integer for $1 \leq i \leq n$.

\vspace{1em}
In this paper, we compute the $\Delta$-unknotting number for positive pretzel knots.
We consider two cases, the odd type and the even type.
Theorem~\ref{thm:odd} and Theorem~\ref{thm:even} deal with positive pretzel knots of odd and even types, respectively.
\begin{restatable}{theorem}{odd} \label{thm:odd}
Let $P=P(p_1, p_2, ... , p_n)$ be a positive pretzel knot of odd type, where $p_i$ is a positive odd integer for $1 \leq i \leq n$ and $n$ is odd.
Then, we have
\begin{equation*}
u^{\Delta}(P) = 
\frac{1}{4} \sum_{1 \leq i<j }^{n} p_{i}p_{j} +  \frac{1}{8} (n-1) \quad \big(= a_2(P)\big).
\end{equation*}
\end{restatable}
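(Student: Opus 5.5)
Since the introduction recalls from \cite{NaNaU} that for positive pretzel knots the $\Delta$-unknotting number coincides with $a_2$, the real content of Theorem~\ref{thm:odd} is the explicit evaluation of $a_2(P)$. The approach is therefore to compute $a_2(P(p_1,\dots,p_n))$ by induction, using the skein relation for the Conway polynomial. For the lower bound I will also invoke Okada's inequality $u^{\Delta}(K)\ge |a_2(K)|$ from \cite{Oka1}. For the upper bound I will either cite \cite{NaNaU} or give a direct argument by explicit $\Delta$-moves.

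For the computation, I will use $\nabla_{K_+}-\nabla_{K_-}=z\nabla_{K_0}$, which at the level of coefficients reads $a_2(K_+)-a_2(K_-)=\mathrm{lk}(K_0)$. I apply this at a crossing of the $i$-th twist band, where $p_i$ is odd and all bands are parallel-oriented in the odd type.
\begin{itemize}
\item Changing one crossing of the band turns $P(\dots,p_i,\dots)$ into $P(\dots,p_i-2,\dots)$.
\item Smoothing that crossing gives a two-component link $L_0$. Since each band has its two strands oppositely oriented, the smoothing separates the band and $L_0$ is a connected sum of torus links $T(2,p_j)$ for $j\ne i$, fused appropriately.
\item I expect $\mathrm{lk}(L_0)=\frac12\sum_{j\neq i}p_j$, with each other band contributing $p_j$ crossings between the two components, each of sign $+\frac12$.
\end{itemize}
This gives the recursion
\[
a_2(P(\dots,p_i,\dots))-a_2(P(\dots,p_i-2,\dots))=\tfrac12\sum_{j\ne i}p_j .
\]
I reduce each $p_i$ down to $1$. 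The base case is $P(1,\dots,1)=T(2,n)$, for which $a_2=(n^2-1)/8$. Summing the recursion telescopically then yields $\frac14\sum_{i<j}p_ip_j+\frac18(n-1)$. As a consistency check, at $p_i\equiv1$ this gives $\frac14\binom n2+\frac{n-1}8=\frac{n^2-1}{8}$.

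For a self-contained upper bound, I would mimic the same induction with $\Delta$-moves. A clasp-type $\Delta$-move sequence reduces $p_i$ by $2$ at a cost of $\frac12\sum_{j\ne i}p_j$ moves, which is possible when that quantity is an integer; parity may force pairing two bands at once. The base case uses $u^\Delta(T(2,n))=(n^2-1)/8$ from the torus knot formula.

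The main obstacle is getting the sign and value of the linking number in the smoothed link exactly right, since orientation conventions in pretzel diagrams are delicate. The cleanest route is to rely on \cite{NaNaU} for the equality $u^\Delta=a_2$ and reduce the whole proof to this Conway polynomial computation.
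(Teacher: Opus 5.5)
Your main route is the paper's: cite \cite{NaNaU} for $u^{\Delta}(P)=a_2(P)$, then evaluate $a_2$ with $a_2(k_+)-a_2(k_-)=\mathrm{lk}(k_0)$, lowering each $p_i$ to $1$ and telescoping down to $T(2,n)$, exactly as in the proof of Lemma~\ref{lemma:odd} (your increment $\frac12\sum_{j\ne i}p_j$, taken with the current values, is the paper's $\frac12\{(i-1)+\sum_{k>i}p_k\}$). One correction: the smoothed link $L_0$ is not a connected sum of torus links $T(2,p_j)$ (that would be a knot); it is the pretzel link $P(p_1,\dots,\widehat{p_i},\dots,p_n)$ with an even number of odd bands, each band carrying one strand of each component, and this is exactly why $\mathrm{lk}(L_0)=\frac12\sum_{j\ne i}p_j$ as you expected; likewise $\sum_{j\ne i}p_j$ is a sum of $n-1$ odd numbers, hence even, so your parity worry never arises.
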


\begin{restatable}{theorem}{even} \label{thm:even}
Let $P=P(p_1, p_2, ... , p_n)$ be a positive pretzel knot of even type, where $p_1$ is an even integer, and $p_i$ is a positive odd integer for $2 \leq i \leq n$. 
Then, we have
\begin{enumerate}
    \item
    If $n$ is even and $p_1 > 0$, then
    \[
    u^{\Delta}(P)=
\frac{1}{8} \sum_{i=1}^{n} p_{i}^2 + \frac{1}{4} p_1 \sum_{i=2}^{n} p_i - \frac{1}{8} (n-1) \quad \big(= a_2(P)\big).
\]
    \item
    If $n$ is odd and $p_1 < 0$, then
    \[
    u^{\Delta}(P)=
\frac{1}{8} \sum_{i=2}^{n} p_{i}^2 - \frac{1}{4} p_1 \sum_{i=2}^{n} p_i - \frac{1}{8} (n-1) \quad \big(= a_2(P)\big).
\]
\end{enumerate}
\end{restatable}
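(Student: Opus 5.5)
The argument has two halves. The lower bound $u^{\Delta}(P)\ge a_2(P)$ comes from Okada's inequality $u^{\Delta}(K)\ge |a_2(K)|$ together with a computation of $a_2(P)$. The real content is the matching upper bound, which I would build by induction on $\sum_{i\ge 2}p_i$, reducing to a two-bridge base case where $u^{\Delta}$ is already known.

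\textbf{Lower bound.} I would compute $a_2(P)$ with the skein relation $a_2(K_+)-a_2(K_-)=\mathrm{lk}(K_0)$, applied at one crossing of a twist box. Changing a crossing in the $i$-th box ($i\ge2$) turns $P(\dots,p_i,\dots)$ into $P(\dots,p_i-2,\dots)$. The smoothing $K_0$ is a two-component link whose linking number is read off from the remaining boxes. Here the parity hypotheses enter: with $p_1$ even and the $p_i$ odd for $i\ge 2$, the strands of box $i$ carry a definite relative orientation, so $\mathrm{lk}(K_0)$ is an explicit linear expression in the other $p_j$. Iterating down to $p_i=1$ for all $i\ge2$, and then handling the $p_1$-box the same way, gives the stated closed forms once the arithmetic series are summed. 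Okada's inequality then yields $u^{\Delta}(P)\ge a_2(P)$, and the computation shows $a_2(P)>0$ in both cases (1) and (2).

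\textbf{Upper bound.} I would prove $d_G^{\Delta}\big(P(\dots,p_i,\dots),P(\dots,p_i-2,\dots)\big)\le a_2(P(\dots,p_i,\dots))-a_2(P(\dots,p_i-2,\dots))$ for each $i\ge 2$. The difference on the right is $\mathrm{lk}(K_0)=:m$ from the skein step. The local lemma needed is the standard one: unclasping a clasp between two bands, where the bands' other intersections contribute linking $m$, can be done by $m$ $\Delta$-moves. Concretely, I would draw the two strands of the crossing being removed. Using the pretzel symmetry (flypes and permutations of the boxes), I would slide one strand across each of the other twist boxes, one crossing pair at a time. Each passage of a strand over a pair of crossings of a twist box is exactly one $\Delta$-move, since it is the triangle move on three arcs. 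Finally a Reidemeister II cancels the clasp. Counting these passages should give exactly $m$ moves, matching the skein difference. Summing over the steps gives
\[
d_G^{\Delta}\big(P,\;P(p_1,1,\dots,1)\big)\le a_2(P)-a_2\big(P(p_1,1,\dots,1)\big).
\]

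\textbf{Base case and main obstacle.} The knot $P(p_1,1,\dots,1)$ is a two-bridge knot: the $n-1$ unit boxes combine into one rational tangle. In case (1) ($n$ even, $p_1>0$) it should be of the form $C(2\beta_1,2\beta_2)$. In case (2) ($n$ odd, $p_1<0$) it should be of the form $C(2\beta_1,2\beta_2-1)$, possibly after a mirror/orientation check. For both forms $u^{\Delta}=a_2$ was computed in \cite{Naka3}, which I will use as given. Combining, $u^{\Delta}(P)\le a_2(P)$, and with the lower bound we get equality. I expect the main obstacle to be the counting in the upper bound: one must verify that the strand-passing really uses $\mathrm{lk}(K_0)$ moves and not more. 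The delicate case is (2), where the negative even box $p_1$ contributes with the opposite sign. There I would first try to pass the strand across the $p_1$-box in the direction that cancels crossings rather than creating them, so that the positive and negative contributions offset exactly.
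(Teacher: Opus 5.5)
Your lower bound is sound. It is in substance the paper's Lemma~\ref{lemma:even}, with a different reduction order: the paper lowers the even box to $0$ and uses $P(0,p_2,\dots,p_n)\cong T(2,p_2)\#\cdots\#T(2,p_n)$.

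One correction there. For $i\ge 2$ the two strands of box $i$ are parallel, so the smoothing is $P(\dots,p_i-1,\dots)$. That link has two even boxes, and every crossing of the remaining odd boxes is a self-crossing. Hence $\mathrm{lk}(K_0)=\frac12(|p_1|+p_i-1)$, which involves only $p_1$ and $p_i$, not ``the other $p_j$''.

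More to the point, the paper does not construct an upper bound at all. It invokes Proposition~\ref{prop:pretzel} from \cite{NaNaU} ($u^{\Delta}(P)=a_2(P)$ for every positive pretzel knot), and the theorem is that proposition combined with the $a_2$ formula.

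Your substitute for that citation has two genuine gaps.

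\textbf{The key inequality is unproved.} The inequality $d_G^{\Delta}\big(P(\dots,p_i,\dots),P(\dots,p_i-2,\dots)\big)\le \mathrm{lk}(K_0)$ is asserted, not proved, and it is exactly the nontrivial content of \cite{NaNaU}. The mechanism you sketch would not produce it as described. Sliding a strand across each of the other twist boxes costs moves that depend on the $p_j$ with $j\ne 1,i$, and these do not appear in $\frac12(|p_1|+p_i-1)$. What is needed is a clasp whose hurdles (in the sense of Claim~\ref{claim:change}) are exactly the full twists of box $1$ and the remaining full twists of box $i$. You have not exhibited one.

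\textbf{The base case is misidentified.} We have $P(p_1,1,\dots,1)=C(p_1,n-1)$.
In case~(1) this is $C(2\beta_1,2\beta_2-1)$ with $\beta_1=p_1/2$ and $\beta_2=n/2$, not $C(2\beta_1,2\beta_2)$. That family is covered by \cite{Naka3}.
In case~(2), however, it is $C(-2a,2b)$ with $p_1=-2a$ and $n-1=2b$, a mixed-sign expansion. As quoted in the introduction, \cite{Naka3} requires all $\beta_i>0$, and no mirror or orientation check repairs this.

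For instance, $P(-4,1,1,1,1)\cong 7_4$ has determinant $15\equiv 3\pmod 4$. So it is not $C(2\beta_1,2\beta_2)$ with $\beta_i>0$, since those have determinant $4\beta_1\beta_2+1$. The only knots $C(2\beta_1,2\beta_2-1)$ with $\beta_i>0$ and determinant $15$ are $C(2,7)\cong 9_2$ and $C(14,1)\cong T(2,15)$.

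So in case~(2) your induction ends at knots whose $\Delta$-unknotting number you have not established. The simplest fix is to cite Proposition~\ref{prop:pretzel}, as the paper does.
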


To prove Theorems \ref{thm:odd} and \ref{thm:even}, we first present Lemmas \ref{lemma:odd} and \ref{lemma:even}.

Lemmas~\ref{lemma:odd} and~\ref{lemma:even} apply to pretzel knots $P$ of odd and even types, respectively; in either case, 
$P$ is not necessarily a positive pretzel knot.

\begin{restatable}{lemma}{oddl} \label{lemma:odd}
Let $P=P(p_1, p_2, ... , p_n)$ be a pretzel knot of odd type, where $p_i$ is an odd integer for $1 \leq i \leq n$ and $n$ is odd.
Then, we have 
\begin{equation*}
a_2(P) = 
\frac{1}{4} \sum_{1 \leq i<j }^{n} p_{i}p_{j} +  \frac{1}{8} (n-1)
\quad \big(= 
\frac{1}{8} \{ (\sum_{i=1}^{n} p_{i})^2 -\sum_{i=1}^{n} p_{i}^2 \} + \frac{1}{8} (n-1) \big).
\end{equation*}
\end{restatable}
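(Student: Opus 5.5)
Our plan is to prove the formula by induction using the skein relation for the Conway polynomial, $\nabla_{K_+}(z)-\nabla_{K_-}(z)=z\nabla_{K_0}(z)$. Comparing $z^2$-coefficients gives
\[
a_2(K_+)-a_2(K_-)=\operatorname{lk}(K_0),
\]
where $K_0$ is a two-component link and $\operatorname{lk}$ is its linking number (this uses $a_1(L)=\operatorname{lk}(L)$ for two-component links). We induct on $\sum_i |p_i|$, or more conveniently we change one $p_k$ by $\pm 2$ at a time.

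Fix the standard orientation of an odd-type pretzel knot with all $p_i$ and $n$ odd. In each twisted band the two strands run antiparallel, so the crossings are the ones that can be smoothed in the orientation-respecting way. Consider a crossing $c$ in the $k$-th band. Changing $c$ turns $P(\dots,p_k,\dots)$ into $P(\dots,p_k-2,\dots)$, up to sign convention. Smoothing $c$ yields the two-component link obtained by replacing the $k$-th band by a ``horizontal'' $0$-tangle. The result is the connected sum of the $(2,p_i)$ torus knots/links for $i\ne k$, closed up into a two-component link. Concretely, smoothing the $k$-th column in the antiparallel direction splits the diagram into two components, each passing through the remaining $n-1$ columns, with crossings between the two components in every column $i\neq k$.

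The key step is to compute $\operatorname{lk}(K_0)$, which we expect to be $\pm\frac12\sum_{i\ne k}p_i$ after checking signs. Each column $i\ne k$ contributes $|p_i|$ crossings between the two components, all of the same sign $\operatorname{sign}(p_i)$ relative to the orientation. Fixing this sign consistently across columns is the main obstacle. We would handle it by drawing the standard oriented diagram, in which consecutive columns alternate upward/downward strands, and checking that smoothing yields compatible orientations. Then we verify the increment
\[
f(p_k)-f(p_k-2)=\tfrac14\cdot 2\sum_{i\ne k}p_i=\tfrac12\sum_{i\ne k}p_i
\]
for $f=\frac14\sum_{i<j}p_ip_j+\frac18(n-1)$, which should match $\operatorname{lk}(K_0)$ exactly.

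For the base case, reduce every $p_i$ to $\pm1$. Better still, reduce all of them to $1$: then $P(1,\dots,1)=T(2,n)$, and we can use the known value $a_2(T(2,n))=\frac{n^2-1}{8}$. This agrees with $\frac14\binom n2+\frac{n-1}8=\frac{(n-1)(n+1)}{8}$. Since the induction step lets us move any $p_k$ by $2$ in either direction through the odd integers, every odd tuple is reached from $(1,\dots,1)$. Finally, the parenthetical identity is the algebraic identity $2\sum_{i<j}p_ip_j=(\sum p_i)^2-\sum p_i^2$.
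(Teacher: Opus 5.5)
Your proposal is correct and follows the paper's proof essentially step for step: the skein identity $a_2(K_+)-a_2(K_-)=\mathrm{lk}(K_0)$ with $\mathrm{lk}(K_0)=\frac12\sum_{i\ne k}p_i$, moving one $p_k$ by $2$ at a time toward $1$ (the paper treats $p_k<0$ by stepping up through $-1$), and the base case $a_2(T(2,n))=\frac18(n^2-1)$. One correction: the oriented smoothing replaces the $k$-th twist region by the horizontal $0$-tangle, so $K_0$ is the two-component pretzel link $P(p_1,\dots,\widehat{p_k},\dots,p_n)$, not a connected sum of $T(2,p_i)$'s (that is $P(\dots,0,\dots)$, a different smoothing); and the sign issue you flagged is settled by noting that in an antiparallel twist the crossing sign depends only on the handedness of the twist, i.e.\ on $\mathrm{sign}(p_i)$.
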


\begin{restatable}{lemma}{evenl} \label{lemma:even}
Let $P=P(p_1, p_2, ... , p_n)$ be a pretzel knot of even type, where $p_1$ is an even integer, and $p_i$ is an odd integer for $2 \leq i \leq n$. 
Then, we have
\begin{enumerate}
    \item
    If $n$ is even, then
    \[
    a_2(P) =
\frac{1}{8} \sum_{i=1}^{n} p_{i}^2 + \frac{1}{4} p_1 \sum_{i=2}^{n} p_i - \frac{1}{8} (n-1).
\]
    \item
    If $n$ is odd, then
    \[
    a_2(P) =
\frac{1}{8} \sum_{i=2}^{n} p_{i}^2 - \frac{1}{4} p_1 \sum_{i=2}^{n} p_i - \frac{1}{8} (n-1).
\]
\end{enumerate}
\end{restatable}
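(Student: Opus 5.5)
Lemma~\ref{lemma:even} computes $a_2$ for pretzel knots of even type. I will prove it by induction on the twist parameters using the skein relation for the Conway polynomial,
\[
\nabla(L_+)-\nabla(L_-)=z\nabla(L_0),
\]
whose $z^2$-coefficient gives $a_2(L_+)-a_2(L_-)=a_1(L_0)$. Here $a_1(L_0)$ is the linking number of the two-component link $L_0$.

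\textbf{Orientations.} Since $p_1$ is even and the other $p_i$ are odd, the two strands in the first twist region are oriented antiparallel. In each odd twist region $i\ge 2$ they are oriented parallel, so those crossings are oriented ``horizontally''. I will check this on the standard diagram.

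\textbf{Reducing $p_1$.} Change one crossing in the first column. Then $p_1$ becomes $p_1\mp 2$, and smoothing that crossing produces a link $L_0$. Because the strands are antiparallel, the smoothing cuts the first column. What remains is the two-component link obtained by joining the odd columns $p_2,\dots,p_n$ in a row: the rational-type closure whose components run through the twist regions, a connected sum of $(2,p_i)$ torus-link pieces.

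\textbf{Linking number of $L_0$.} I will compute this as half the signed count of crossings between the two components. Since the columns are parallel twists, each column contributes $\pm p_i/2$. This should give $\operatorname{lk}(L_0)=\pm\frac12\sum_{i\ge2}p_i$, with the sign depending on the parity of $n$ (parity changes the orientation pattern). One step changes $p_1$ by $2$, so the increment is $\frac14\cdot 2\sum_{i\ge 2}p_i$, matching the $\pm\frac14 p_1\sum_{i\ge2}p_i$ term.

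\textbf{Base case $p_1=0$.} Now $P(0,p_2,\dots,p_n)$ is the connected sum of the torus knots or links $T(2,p_i)$, or rather, the first column being empty makes it a connected sum of $(2,p_i)$ pieces. Equivalently, reduce each $p_i$ for $i\ge2$ by skein moves down to $\pm1$, using $a_2(T(2,p))=(p^2-1)/8$ and additivity of $a_2$ for knots under connected sum (as $a_1=0$ for knots). This gives $\sum_{i\ge2}(p_i^2-1)/8$.

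\textbf{Reconciling with the formula.} Since $p_1^2/8$ is quadratic in $p_1$ while the skein step in $p_1$ seems linear, I suspect that in case (1) the antiparallel analysis differs. With $n$ even, the odd columns may instead be antiparallel and column $1$ parallel. In that case the recursion on $p_1$ gives $a_1(L_0)$ depending on $p_1$ itself (a $T(2,p_1)$-like contribution), producing $p_1^2/8$.

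\textbf{Main obstacle.} The hardest step is determining the orientations correctly in each parity case and computing the resulting signed linking numbers. I would first fix explicit diagrams for small $n$ (say $n=2,3$), orient them, and verify the formula, for example $P(2,1)$ and $P(-2,1,1)$. Only then would I do the general sign bookkeeping. Finally, I will check that the constants $-\frac18(n-1)$ arise from the base-case sum $\sum_{i=2}^n(-1/8)$.
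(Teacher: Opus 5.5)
Your overall route is the paper's: apply Proposition~\ref{prop:lk} to a crossing in the first column, walk $p_1$ down to $0$ in steps of $2$, and finish with $P(0,p_2,\dots,p_n)\cong T(2,p_2)\#\cdots\#T(2,p_n)$. The gap is in the orientation analysis, on which every linking number depends. It is correct only for case (2), and the repair you float for case (1) is also wrong.

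Column orientations are not determined by the parities of the $p_i$ alone. The arcs joining columns $i$ and $i+1$ force adjacent parallel columns to run in opposite vertical directions, so a cyclic chain of parallel columns must have even length. Carrying this out gives two cases:
\begin{itemize}
\item for $n$ odd, column $1$ is antiparallel and columns $2,\dots,n$ are parallel, as you say;
\item for $n$ even, \emph{all} $n$ columns, including the first, are parallel.
\end{itemize}
Your first claim fails visibly for $n$ even. Cutting column $1$ would leave $P(p_2,\dots,p_n)$ with an odd number of odd columns, which is a knot rather than a two-component link; for $P(2,1)$ you would get the unknot $P(1)$. Your fallback (column $1$ parallel, the odd columns antiparallel) is inconsistent with the arcs joining columns $1$ and $2$. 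It would also give the crossings of columns $2,\dots,n$ the sign opposite to those of column $1$, and hence the wrong sign on $\frac14 p_1\sum_{i\ge2}p_i$.

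Here is what case (1) needs. Since column $1$ is parallel, the oriented smoothing simply deletes the crossing, so $L_0=P(p_1-1,p_2,\dots,p_n)$. This is an all-odd pretzel link with an even number of columns. Each of its two components uses exactly one strand of every column, so every crossing is between the two components. Since all columns are parallel, each crossing's sign follows its twist. Thus $lk(L_0)=\frac12\{(p_1-1)+\sum_{i\ge2}p_i\}$, which depends on $p_1$. Summing over the $p_1/2$ steps gives $\sum_{k=1}^{p_1/2}\frac12\{(2k-1)+\sum_{i\ge2}p_i\}=\frac18p_1^2+\frac14p_1\sum_{i\ge2}p_i$. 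Together with the base case this is formula (1). For $p_1<0$, use $L_0=P(p_1+1,p_2,\dots,p_n)$ instead.

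Separately, correct your description of $L_0$ in case (2). Cutting the antiparallel column replaces it by two horizontal arcs and leaves the pretzel link $P(p_2,\dots,p_n)$ with $n-1$ odd columns. It is not a connected sum of $(2,p_i)$ pieces; for odd $p_i$ that would be a knot, with no linking number at all. Your count $lk(L_0)=\frac12\sum_{i\ge2}p_i$ is nonetheless right for $P(p_2,\dots,p_n)$, for the same reason as above.
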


By the way, there exist pretzel knots 
$P=P(p_1, p_2, ... , p_n)$ that are not positive pretzel knots, yet satisfy $u^{\Delta}(P) = |a_2(P)|$.
In particular, if $P$ is of odd type with $p_1 = -1$ and $p_i$ is a positive odd integer for $2 \leq i \leq n$, then $u^{\Delta}(P) = a_2(P)$, as stated in Theorem~\ref{thm:oddone}.
This example suggests that the equality between the $\Delta$-unknotting number and the second coefficient of the Conway polynomial holds beyond the class of positive pretzel knots.

\begin{restatable}{theorem}{oddone} \label{thm:oddone}
Let $P=P(-1, p_2, ... , p_n)$ be a pretzel knot of odd type, where $p_i$ is a positive odd integer for $2 \leq i \leq n$ and $n$ is odd.
Then, we have 
\begin{equation*}
u^{\Delta}(P) = - \frac{1}{4} \sum_{i = 2}^{n} p_{i} + 
\frac{1}{4} \sum_{2 \leq i<j }^{n} p_{i}p_{j} +  \frac{1}{8} (n-1) \quad \big(= a_2(P)\big).
\end{equation*}
\end{restatable}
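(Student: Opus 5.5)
Lower bound and value of $a_2$: by Lemma~\ref{lemma:odd} with $p_1=-1$,
\[
a_2(P)=\frac14\Bigl(-\sum_{i=2}^n p_i+\sum_{2\le i<j}^n p_ip_j\Bigr)+\frac18(n-1).
\]
This matches the stated formula. It is nonnegative, since for $n\ge 3$ each $p_i\le \sum_{j\ne i}p_ip_j$. So Okada's inequality $u^{\Delta}(K)\ge |a_2(K)|$ gives $u^{\Delta}(P)\ge a_2(P)$. It remains to show $u^{\Delta}(P)\le a_2(P)$ by exhibiting an explicit sequence of $a_2(P)$ $\Delta$-moves to the unknot.

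For the upper bound I would argue by induction, reducing to the positive case handled by Theorem~\ref{thm:odd}. The key observation is that the single negative crossing in the first tangle can be absorbed. A pretzel knot is invariant under cyclic permutation of its tangles, and a $-1$ tangle adjacent to a $p_2$ tangle can be flyped/isotoped. In fact $P(-1,p_2,\dots,p_n)$ is isotopic to a diagram where the $-1$ crossing cancels one crossing of a neighbouring twist region after a flype. This is the standard fact that $P(-1,p_2,p_3,\dots)$ relates to a positive knot, as in $P(-1,3,3)$ being $T(3,4)$ up to mirror considerations.

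Rather than rely on an identification, the plan is to show directly that one can reduce some $p_i$ by $2$ at the cost of a controlled number of $\Delta$-moves. A standard move in \cite{NaNaU} for positive pretzel knots converts $P(\dots,p_i,\dots)$ to $P(\dots,p_i-2,\dots)$ using $\Delta$-moves. The number of moves needed is exactly the drop in $a_2$, namely
\[
\tfrac14\cdot 2\sum_{j\ne i}p_j = \tfrac12\sum_{j\ne i}p_j,
\]
which must be an integer. The mechanism is that a pair of crossings in tangle $i$ clasps around the other strands, and sliding them off via $\Delta$-moves costs one move per pair of crossings passed. I would check that this same local construction works verbatim when one of the other tangles is $-1$. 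The count then becomes $\tfrac12(-1+\sum_{j\ne 1,i}p_j)$, which again equals the drop in $a_2$ computed from Lemma~\ref{lemma:odd}. Iterating, I would reduce all $p_i$ ($i\ge2$) to $1$, spending exactly $a_2(P)-a_2(P(-1,1,\dots,1))$ moves.

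For the base case, $P(-1,1,\dots,1)$ with $n-1$ ones is the $(2,n-2)$ torus knot after the $-1$ cancels a $+1$ by Reidemeister II. Its $a_2$ by the lemma is
\[
\tfrac14\Bigl(-(n-1)+\tbinom{n-1}{2}\Bigr)+\tfrac{n-1}8 = \tfrac{(n-1)(n-3)}8,
\]
which matches $u^{\Delta}(T(2,n-2))=((n-2)^2-1)/8$ from \cite{NaNaU}. Summing the two parts gives $u^{\Delta}(P)\le a_2(P)$.

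The main obstacle is the reduction step. I must verify carefully, with a figure, that the clasp-removal $\Delta$-moves still cost exactly half the signed sum of the other twist counts when one of those tangles is negative. Passing the negative crossing should save a move rather than cost one, so the signs need care. Parity also needs care so that each step uses an integer number of moves. If a uniform local argument fails, I would instead first cancel the $-1$ against one crossing of $p_2$ via a flype, realizing $P$ as a positive-type diagram, and then apply the strategy of Theorem~\ref{thm:odd}.
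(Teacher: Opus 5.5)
Your plan is essentially the paper's proof. The lower bound is Okada's inequality, and the upper bound comes from iterating a crossing change in the $p_2$ twist region costing $\frac12(\sum_{i\ge 3}p_i-1)$ $\Delta$-moves (exactly the paper's Claim~\ref{claim:technique}) until $P(-1,1,p_3,\dots,p_n)\cong P(p_3,\dots,p_n)$; the paper then invokes Theorem~\ref{thm:odd}, while you keep reducing to $T(2,n-2)$, which is an inessential difference. Keep in mind that this crossing-change step is the whole content, and the paper establishes it only by an explicit clasp-and-hurdle picture, which cannot be a hurdle-for-hurdle copy of the positive case: that would use $\frac12(1+\sum_{i\ge 3}p_i)$ moves, which has the wrong parity since each $\Delta$-move changes $a_2$ by $\pm1$. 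Two side remarks are also off: $P(-1,3,3)$ is the trefoil rather than $T(3,4)=P(-2,3,3)$, and absorbing the $-1$ into the $p_2$ region gives a rational tangle of fraction $(1-p_2)/p_2$, not a positive pretzel diagram, so your fallback would not apply.
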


\vspace{1em}
By applying Theorems \ref{thm:odd}, \ref{thm:even}, and the formula 
$u^{\Delta}(T(p,q)) = \frac{1}{24} (p^2-1)(q^2-1)$, we obtain the following theorems (Theorems \ref{thm:deltamax} and \ref{thm:deltaone}).

\begin{restatable}{theorem}{deltamax}\label{thm:deltamax}
The following statements hold.

\begin{enumerate}

\item
Among positive pretzel knots with a fixed crossing number $n$,
the $\Delta$-unknotting number is maximized as follows.

\begin{enumerate}
\renewcommand{\labelenumii}{\textnormal{(\roman{enumii})}}

\item
Suppose that the knot is of odd type and that $n$ is odd.
Then the maximum is attained by
$P(\underbrace{1,1,\dots,1}_{n})$,
and its value is
$\frac{1}{8}(n^{2}-1)$.

\item
Suppose that the knot is of even type~A, where $n$ is odd and
$p_{1}$ is a positive even integer satisfying $p_{1}<n$.
Then the maximum is attained by
$P(p_{1},\, n-p_{1})$,
and its value is
$\frac{1}{8}(n^{2}-1)$.

\item
Suppose that the knot is of even type~B, where $n\ge 8$ is even.
Then the maximum is attained by
$P(-2, 3, n-5)$,
and its value is
$\frac{1}{8}(n^{2}-6n+24)$.

\end{enumerate}

\item
Among torus knots with a fixed odd crossing number $n$,
the $\Delta$-unknotting number is maximized by
$T(2,n)$,
and the maximal value is
$\frac{1}{8}(n^{2}-1)$.

\end{enumerate}
\end{restatable}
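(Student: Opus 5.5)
The plan is to treat the four cases separately. In each case the $\Delta$-unknotting number becomes an explicit quadratic function of the parameters: for pretzel knots by Theorems~\ref{thm:odd} and~\ref{thm:even}, and for torus knots by $u^{\Delta}(T(p,q))=\frac{1}{24}(p^2-1)(q^2-1)$. We then maximize this function subject to the constraint that the crossing number equals $n$.

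First we fix how the crossing number relates to the parameters. A positive pretzel knot of odd type, or of even type A (where $p_1>0$), has a reduced alternating pretzel diagram. Its crossing number is therefore $\sum_i |p_i|$ by the Kauffman--Murasugi--Thistlethwaite theorem. For even type B ($p_1=-q<0$, number of strands $m$ odd), we restrict to reduced parameters $q\ge 2$ and $p_i\ge 3$. The diagram is then adequate by Lickorish--Thistlethwaite, so again $c=q+\sum_{i\ge2}p_i$. I expect this crossing-number bookkeeping, especially in type B (ruling out entries equal to $1$, which can be cancelled against $p_1$), to be the main obstacle. The rest is elementary optimization.

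\textbf{Odd type.} Write $m$ for the number of strands and $\sum p_i=n$. By Theorem~\ref{thm:odd},
\[
8u^{\Delta}=n^2-\sum p_i^2+(m-1).
\]
Replace an entry $p\ge3$ by the three entries $p-2,1,1$. This keeps the sum $n$ and keeps $m$ odd, and it changes $-\sum p_i^2+m$ by $4p-4>0$. Hence the maximum occurs when all $p_i=1$, giving $m=n$ and the value $\frac18(n^2-1)$.

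\textbf{Even type A.} Here $m$ is even and $S=\sum_{i\ge2}p_i=n-p_1$. Theorem~\ref{thm:even}(1) rewrites as
\[
8u^{\Delta}=n^2-2\sum_{2\le i<j}p_ip_j-(m-1).
\]
Since $m\ge2$, this is at most $n^2-1$. Equality holds exactly when $m=2$, that is, for $P(p_1,n-p_1)$.

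\textbf{Even type B.} Theorem~\ref{thm:even}(2) gives
\[
8u^{\Delta}=\sum_{i\ge2}p_i^2+2qS-(m-1).
\]
Fix $q$, so that $S=n-q$ is fixed. Merging two odd entries into one (their sum is even, so we instead merge three entries into one odd entry, or equivalently pass from $m$ to $m-2$) increases $\sum p_i^2$ and decreases $m-1$. So the maximum has $m=3$, with entries $a+b=S$ and $a,b\ge3$. The sum $a^2+b^2$ is maximal at the extreme $a=3$, $b=S-3$. This gives
\[
g(q)=7+(n-q-3)^2+2q(n-q).
\]
The function $g$ is concave in $q$ with vertex at $q=3$, so over even $q\ge2$ the maximum is at $q=2$ (tied with $q=4$). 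This yields $\frac18(n^2-6n+24)$, attained by $P(-2,3,n-5)$. The condition $n\ge8$ ensures $n-5\ge3$.

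\textbf{Torus knots.} For $2\le p<q$, up to mirror image (which does not change $u^{\Delta}$), we have $c(T(p,q))=q(p-1)=n$. The case $p=2$ gives $(n^2-1)/8$. For $p\ge3$ we have $q=n/(p-1)$, and
\[
u^{\Delta}<\frac{(p^2-1)n^2}{24(p-1)^2}=\frac{(p+1)n^2}{24(p-1)}\le\frac{n^2}{12}\le\frac{n^2-1}{8}.
\]
So $T(2,n)$ is the maximizer.
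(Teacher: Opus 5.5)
Your argument is correct and follows the paper's route. You plug in the closed formulas (Theorems~\ref{thm:odd}, \ref{thm:even}, Proposition~\ref{prop:torus}), identify the crossing number with the crossing count of the standard diagram (and use $c(T(p,q))=q(p-1)$), and optimize. In (i), (ii) and (2) the differences are cosmetic:
\begin{itemize}
\item in (i) the paper uses $\sum p_i^2\ge m$ directly instead of your exchange $p\mapsto(p-2,1,1)$;
\item (ii) is handled exactly as you do;
\item for torus knots the paper only asserts a ``direct computation'', which your chain $u^{\Delta}<\frac{(p+1)n^2}{24(p-1)}\le\frac{n^2}{12}\le\frac{n^2-1}{8}$ actually carries out.
\end{itemize}

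In (iii) you do more than the paper. The paper states $u^{\Delta}\le\frac{1}{8}(n^2-6n+24)$ without argument and claims equality only for $p_1=-2$. Your reduction to $m=3$ and the formula $g(q)=n^2-6n+25-(q-3)^2$ prove the bound. The tie at $q=4$ that you noticed is real: $P(-4,3,n-7)$ also attains the maximum once $n\ge 10$. For example, $P(-4,3,3)$ and $P(-2,3,5)$ both have $a_2=8$, and they are distinct knots (determinants $15$ and $1$). So the paper's equality characterization is too strong, although the theorem itself only asserts attainment.

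Two repairs are needed in your crossing-number step, which the paper simply assumes.

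\emph{Adequacy.} The standard diagram of $P(-q,p_2,\dots,p_m)$ is only semi-adequate, not adequate. In one of the two extreme states, the top and bottom arcs merge into a single circle running through both vertical strands of the $-q$ column. As a check, for $P(-2,3,3)\cong T(3,4)$ adequacy would force the Jones polynomial to have span $7$, but $V=t^3+t^5-t^8$ has span $5$. What you actually need is Lickorish--Thistlethwaite's theorem that a reduced Montesinos diagram with at least three rational tangles is minimal. Here this applies because $m\ge3$ and all $|p_i|\ge2$.

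\emph{Entries equal to $1$.} You flagged this exclusion but did not carry it out, and it is essential. The knot $P(-2,1,N-3)\cong T(2,N-1)$ has an $N$-crossing type~B diagram and $u^{\Delta}=\frac{1}{8}(N^2-2N)>\frac{1}{8}(N^2-6N+24)$ for $N\ge 8$. The theorem survives only because its crossing number is $N-1$. The fix is short:
\begin{itemize}
\item Flype the $k\ge1$ unit crossings next to the $-q$ column.
\item There they merge into the positive rational tangle $k-\frac{1}{q}$, with continued fraction $[k-1,1,q-1]$ and $q+k-1$ crossings.
\item The result is a reduced alternating diagram with $q+\sum_{i\ge2}p_i-1$ crossings.
\item This number is odd, because $q$ and $m-1$ are even.
\end{itemize}
Hence such knots never have even crossing number, and your restriction to $p_i\ge3$ is justified.
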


Here, $P(\underbrace{1, 1, ... , 1}_{\text{$n$}}) \cong P(p_1, n-p_1) \cong T(2,n)$.

\begin{restatable}{theorem}{deltamin} \label{thm:deltamin}
Among positive pretzel knots of odd type with a fixed odd crossing number $n$, the ${\Delta}$-unknotting number is minimized by $P(1, 1, n-2)$, and the minimum value is $\frac{1}{2}(n-1)$.
\end{restatable}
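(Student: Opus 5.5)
By Theorem~\ref{thm:odd}, for a positive pretzel knot of odd type we have $u^{\Delta}(P)=a_2(P)$. So the statement reduces to an elementary optimization problem for the explicit formula of Lemma~\ref{lemma:odd}; no new $\Delta$-move constructions are needed. I would first fix the meaning of ``crossing number $n$''. Write $P=P(p_1,\dots,p_m)$ with $m$ odd and all $p_i\ge 1$ odd. We need $m\ge 3$, since a single column closes up to the trivial knot. The standard diagram of such a knot is alternating, and for $m\ge 3$ with all $p_i>0$ it is reduced (no nugatory crossing). By the Kauffman--Murasugi--Thistlethwaite theorem its crossing number is therefore $\sum_{i=1}^m p_i=n$. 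The problem becomes: minimize
\[
F(p_1,\dots,p_m)=\frac{1}{8}\Bigl\{ n^2-\sum_{i=1}^m p_i^2\Bigr\}+\frac{1}{8}(m-1)
\]
over odd $m\ge 3$ and positive odd $p_i$ with $\sum p_i=n$, using the second form of Lemma~\ref{lemma:odd}.

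Next I would fix $m$ and maximize $\sum p_i^2$. Suppose two entries satisfy $3\le p_i\le p_j$. Replacing $(p_i,p_j)$ by $(p_i-2,p_j+2)$ keeps the entries positive and odd and keeps their sum. It changes $\sum p^2$ by $4(p_j-p_i)+8>0$, so it strictly decreases $F$. Repeating this, a minimizer for fixed $m$ has at most one entry bigger than $1$. Up to the symmetries of pretzel knots, it is therefore $(1,\dots,1,n-m+1)$. For this tuple, $\sum p_i^2=(m-1)+(n-m+1)^2$, and so
\[
F=\frac{1}{8}\bigl\{n^2-(n-m+1)^2\bigr\}.
\]
Since $1\le n-m+1\le n$, this quantity is strictly increasing in $m$. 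Hence the overall minimum occurs at $m=3$, namely at $P(1,1,n-2)$. Its value is $\frac{1}{8}\{n^2-(n-2)^2\}=\frac{1}{2}(n-1)$, which agrees with a direct computation from Theorem~\ref{thm:odd}: $\frac14(1+2(n-2))+\frac14$.

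The main obstacle is not the algebra but the justification that the crossing number equals $\sum p_i$. This needs the diagram to be reduced alternating and the degenerate case $m=1$ to be excluded. I would handle this carefully at the start, including the small case $n=3$, where the only candidate is $P(1,1,1)$. I would also note that the strict inequalities show the minimizer is unique up to permuting the $p_i$.
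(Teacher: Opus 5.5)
Your proposal is correct and follows the paper's proof essentially step for step. Like the paper, you use Theorem~\ref{thm:odd} to reduce to $\frac18\{n^2-\sum p_i^2\}+\frac18(m-1)$, bound $\sum p_i^2\le (n-m+1)^2+(m-1)$ for fixed $m$, and use monotonicity in $m$ to reach $m=3$. Your exchange argument for the sum-of-squares bound, and your explicit exclusion of $m=1$ (where $\sum p_i$ is not the crossing number, so the paper's final inequality would fail), fill in points the paper leaves implicit.
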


\begin{restatable}{theorem}{deltaone} \label{thm:deltaone}
The following statements hold.
\begin{enumerate}
\item A positive pretzel knot has $\Delta$-unknotting number one if and only if it is $3_1$ or its mirror image.
\item A torus knot has $\Delta$-unknotting number one if and only if it is $3_1$ or its mirror image.
\end{enumerate}
\end{restatable}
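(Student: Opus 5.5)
The statement to prove is Theorem~\ref{thm:deltaone}. The plan is to combine the closed formulas for $u^{\Delta}$, namely Theorems~\ref{thm:odd} and~\ref{thm:even} for positive pretzel knots and $u^{\Delta}(T(p,q))=\frac{1}{24}(p^2-1)(q^2-1)$ for torus knots. In each case we show that the value $1$ forces the trefoil. The direction ``if'' is immediate: $3_1=T(2,3)=P(1,1,1)$ has $u^{\Delta}=\frac{1}{8}(9-1)=1$, and $u^{\Delta}$ is invariant under taking mirror images. A $\Delta$-move diagram mirrors to a $\Delta$-move, so this holds for either chirality.

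For (2), write $T(p,q)$ with $2\le p<q$, $\gcd(p,q)=1$, up to mirror image; the formula is insensitive to signs. Then $(p^2-1)(q^2-1)=24$. Since $p^2-1\ge 3$ and $q^2-1\ge 8$, the product is at least $24$, with equality only when $p=2$ and $q=3$. Hence the knot is $T(2,3)=3_1$ or its mirror.

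For (1), we handle the three types separately. A positive pretzel knot is understood up to mirror image, so we may take all odd $p_i>0$.

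In the odd type, $n\ge 3$ is odd. Theorem~\ref{thm:odd} gives $u^{\Delta}=\frac14\sum_{i<j}p_ip_j+\frac{n-1}{8}$. The sum has $\binom n2$ terms, each at least $1$, so $u^{\Delta}\ge \frac{n(n-1)}{8}+\frac{n-1}{8}=\frac{n^2-1}{8}$. This is at least $1$, with equality only if $n=3$ and all $p_i=1$, that is, $P(1,1,1)=3_1$.

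In even type (1), $p_1\ge 2$ and $n$ is even. We must exclude $n=1$ knots of the form $P(p_1)$, which are unknots or links. The first term is $\frac18\sum p_i^2\ge \frac18(4+(n-1))$, and the second is $\frac14 p_1\sum_{i\ge2}p_i\ge \frac{n-1}{2}$. So $u^{\Delta}\ge \frac{4+n-1}{8}+\frac{n-1}{2}-\frac{n-1}{8}=\frac12+\frac{n-1}{2}=\frac n2$. For this to equal $1$ we need $n=2$ with all inequalities sharp, giving $P(2,1)$, a $T(2,3)$ (the trefoil).

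In even type (2), $p_1\le -2$ and $n\ge 3$ is odd. Then $u^{\Delta}\ge \frac{n-1}{8}+\frac{n-1}{2}-\frac{n-1}{8}=\frac{n-1}{2}\ge 1$. Equality forces $n=3$, $p_1=-2$, $p_2=p_3=1$, giving $P(-2,1,1)$. A direct check (e.g.\ by flypes and untwisting) identifies it as the trefoil, or equivalently a positive diagram with $a_2=1$ and genus one.

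The main obstacle is bookkeeping rather than depth. We must make sure the parameter ranges really cover all positive pretzel knots, including degenerate $n=1,2$ cases and any equivalent presentations. Above all, the minimal candidates $P(2,1)$ and $P(-2,1,1)$ must be recognized as the trefoil and not some other knot with $a_2=1$. For that we would compare crossing number and the Conway polynomial, which is $1+z^2$ in each case, and note that the only knots with at most $3$ crossings are the unknot and the trefoil.
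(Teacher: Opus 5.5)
Your argument is correct and is essentially the paper's own. The paper simply asserts that Theorems~\ref{thm:odd}, \ref{thm:even} and Proposition~\ref{prop:torus} leave only $P(1,1,1)$, $P(2,1)$, $P(-2,1,1)$ and $T(2,3)$ up to mirror image, and your lower bounds $\frac{1}{8}(n^2-1)$, $\frac{n}{2}$, $\frac{n-1}{2}$ and $(p^2-1)(q^2-1)\ge 24$ supply the details it omits. One small fix: the standard diagram of $P(-2,1,1)$ has $4$ crossings, so your crossing-number check must allow $4$ crossings and then exclude $4_1$ using $a_2(4_1)=-1$ (alternatively, note that $P(-2,1,1)$ is a two-bridge knot of determinant $3$).
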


From Theorem \ref{thm:deltamax}, we make the following conjectures (Conjectures \ref{conj:conjcrtn} and \ref{conj:conjcrposi}).

\begin{restatable}{conjecture}{conjcrtn} \label{conj:conjcrtn}
Among knots with a fixed crossing number $n$, where $n$ is odd, the ${\Delta}$-unknotting number is maximized by $T(2,n)$, and the maximum value is $\frac{1}{8}(n^2 - 1)$.
\end{restatable}

\begin{restatable}{conjecture}{conjcrposi} \label{conj:conjcrposi}
Among knots with a fixed crossing number different from $4$ and $6$, the $\Delta$-unknotting number is maximized by positive knots.
\end{restatable}

For prime knots with at most 10 crossings, Conjectures 
\ref{conj:conjcrtn} and
\ref{conj:conjcrposi} 
hold.

In \cite{Tani}, several relations between knot invariants are established, one of which is related to Conjectures
\ref{conj:conjcrtn} and \ref{conj:conjcrposi}.
In particular, for a knot $K$ with crossing number $c(K) \geq 4$, one has
$u^{\Delta}(K) \leq \frac{1}{4} \big(c(K)^2 - 2c(K) - 3 \big)$.

Concerning the ordinary unknotting operation,
it is well known that every nontrivial knot $K$ satisfies the inequality $u(K) \leq \frac{1}{2} \big(c(K) - 1 \big)$.
Moreover, equality holds if and only if $K$ is $T(2,n)$ for some odd integer $n$ (see \cite{Tani2}).

From Theorem \ref{thm:deltaone}, we make the following conjecture (Conjecture \ref{conj:conjpkdone}).

\begin{restatable}{conjecture}{conjpkdone} \label{conj:conjpkdone}
A positive knot has $\Delta$-unknotting number one if and only if it is $3_1$.
\end{restatable}

For prime knots with at most 10 crossings, Conjecture \ref{conj:conjpkdone} holds.

In contrast, from the result of \cite{Ta}, we obtain the following corollary for almost positive knots.
\begin{restatable}{corollary}{apkone} \label{thm:apkone}
There is no almost positive knot with ${\Delta}$-unknotting number one. 
\end{restatable}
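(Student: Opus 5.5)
The plan is to combine Okada's inequality $u^{\Delta}(K)\ge |a_2(K)|$ with a known bound on $a_2$ for almost positive knots, taken from \cite{Ta}. An almost positive knot is a knot that is not positive but has a diagram with exactly one negative crossing. The relevant statement, as I recall it from \cite{Ta}, is that every nontrivial almost positive knot $K$ satisfies $a_2(K)\ge 2$. Together with Okada's inequality this gives $u^{\Delta}(K)\ge |a_2(K)|\ge 2$, which excludes $u^{\Delta}(K)=1$. The trivial knot is positive, so it is not almost positive, and it has $u^{\Delta}=0$ in any case. This settles the corollary.

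If the cited result only gives $a_2(K)\ge 1$, I would argue in two steps instead. First I would show that $u^{\Delta}(K)=1$ forces $|a_2(K)|=1$. This follows from the fact that a single $\Delta$-move changes $a_2$ by exactly $\pm1$, a standard consequence of Okada's work. So if $u^{\Delta}(K)=1$, then $a_2(K)=a_2(O)\pm 1=\pm 1$. Second, I would exclude $a_2(K)=1$ for almost positive knots by quoting the finer statement in \cite{Ta}, namely that almost positive knots have $a_2\ge 2$. I believe \cite{Ta} proves exactly this.

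The main obstacle is therefore making sure the cited inequality is the strict version ($a_2\ge 2$), since the rest is immediate. If \cite{Ta} only gave positivity of $a_2$, one would need an extra argument excluding $a_2=1$, for instance via the classification of positive knots with $a_2=1$ and comparison of genus. I expect this not to be necessary.
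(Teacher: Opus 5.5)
\paragraph{The gap.} Your argument depends on a claim about $a_2$ that \cite{Ta} does not contain, and you give no other proof of it. You cite \cite{Ta} for ``almost positive knots satisfy $a_2(K)\ge 2$'' (or, in your fallback, $a_2(K)\ge 1$). Tagami's paper does not prove either statement. It shows that the Rasmussen invariant (suitably normalized), the $4$-genus and the $3$-genus of an almost positive knot coincide. As a corollary it shows that \emph{no almost positive knot has $4$-genus one}. Nothing there concerns the Conway coefficient $a_2$.

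Your Okada step is correct: a single $\Delta$-move changes $a_2$ by exactly $\pm1$, so $u^{\Delta}(K)=1$ forces $|a_2(K)|=1$. What remains is to rule out $|a_2(K)|=1$ for almost positive knots, and that is left unproved. It is not obvious either. Skeining at the single negative crossing via Proposition~\ref{prop:lk} gives $a_2(K)=a_2(K_+)-lk(K_0)$, with $K_+$ positive and $lk(K_0)\ge 0$. That is only an upper bound on $a_2(K)$. As written, the route through $a_2$ does not close.

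\paragraph{The missing idea.} Use the $4$-genus instead of $a_2$, which is what the paper does. The $\Delta$-unknotting number satisfies $u^{\Delta}(K)\ge g_4(K)$, which the paper takes from \cite{MaN}. The reason is that a $\Delta$-move can be realized by a genus-one cobordism in $S^3\times[0,1]$, so each move changes $g_4$ by at most one.

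Suppose an almost positive knot $K$ had $u^{\Delta}(K)=1$. Then $g_4(K)\le 1$. As you correctly note, $K$ is nontrivial because the unknot is positive. So by \cite{Ta}, $g_4(K)=g(K)\ge 1$, which forces $g_4(K)=1$. This contradicts Tagami's corollary. Okada's inequality plays no role in this argument.
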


\vspace{1em}
As mentioned above, for torus knots, positive pretzel knots, and positive $3$-braids, the $\Delta$-unknotting number coincides with the second coefficient of the Conway polynomial.
Therefore, we make the following conjectures 
(Conjectures \ref{conj:conjpd} and \ref{conj:conjdd}).

\begin{restatable}{conjecture}{conjpd} \label{conj:conjpd}
If a prime knot $K$ is positive, then $u^{\Delta}(K) = |a_2(K)|$. 
\end{restatable}

\begin{restatable}{conjecture}{conjdd} \label{conj:conjdd}
If a minimal crossing diagram of a knot $K$ is positive, then $u^{\Delta}(K) = |a_2(K)|$. 
\end{restatable}

Conjecture \ref{conj:conjpd} concerns the positivity of knots, whereas 
Conjecture \ref{conj:conjdd} concerns the positivity of minimal crossing diagrams.
For prime knots with at most 10 crossings, Conjectures
\ref{conj:conjpd} and \ref{conj:conjdd}
hold.

\vspace{1em}
In addition, in Section \ref{sec5}, we give examples demonstrating the usefulness of the inequality $u^{\Delta}(K) \geq |\sigma(K)|/2$ from \cite{MaN}, where $\sigma(K)$ denotes the signature of a knot $K$.

\section{Preliminaries}\label{sec2}

A pretzel link $P(p_1, p_2, ... , p_n)$ is a link obtained by connecting $n$ twisted bands, where each band has $p_i$ half-twists.
The sign of $p_i$ determines the direction of the $i$th twist region
(see Figure \ref{fig:pretzel}).
A pretzel link is a knot if and only if precisely one $p_i$ is even, or all $p_i$ are odd and $n$ is odd.

A positive pretzel knot is a pretzel knot whose standard diagram has either all positive crossings or all negative crossings, as follows.
There are two types of positive pretzel knots: the odd type and the even type.
For a positive pretzel knot of odd type, both $n$ and all $p_i$ are odd and positive.
For a positive pretzel knot of even type, one of the following holds:
\begin{itemize}
    \item even type $A$ : $n$ is even and positive, $p_1$ is even and positive, and $p_i$ $(2 \leq i \leq n)$ are odd and positive; or
    \item even type $B$ : $n$ is odd and positive, $p_1$ is even and negative, and $p_i$ $(2 \leq i \leq n)$ are odd and positive.
\end{itemize}
\begin{figure}
    \centering
    \includegraphics[width=1\linewidth]{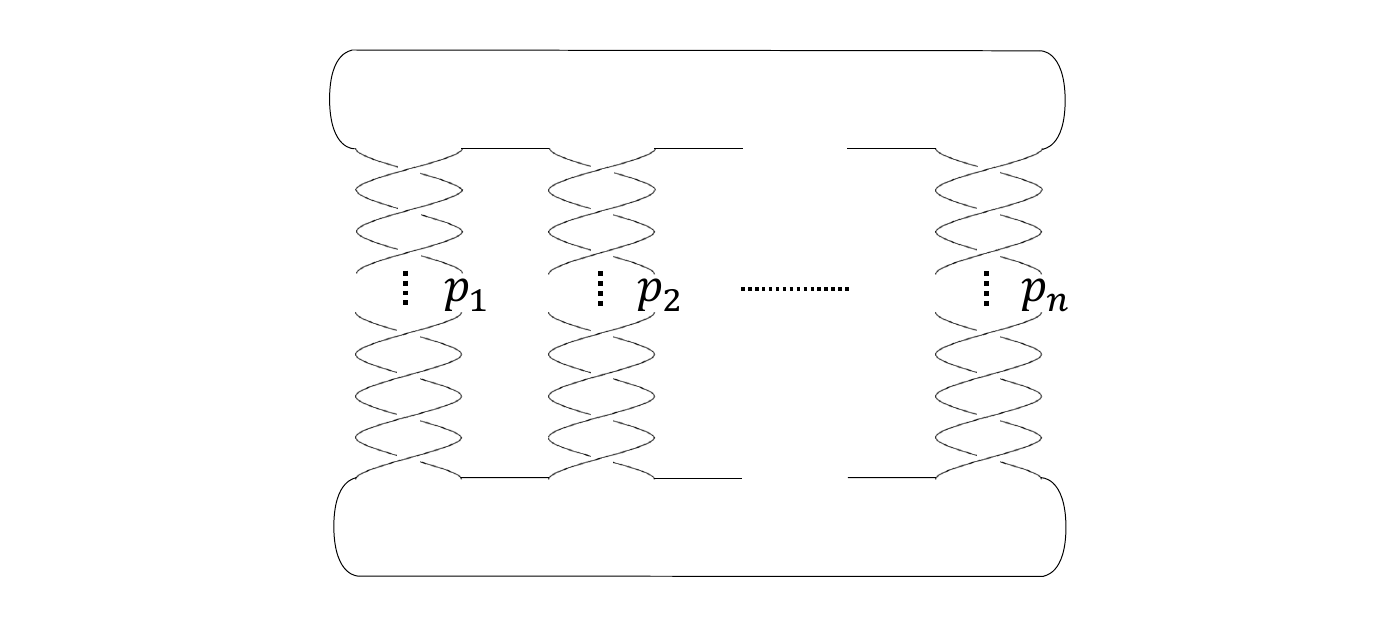}
    \caption{}
    \label{fig:pretzel}
\end{figure}

In this paper, we refer to \cite{DaM, Kaw} for the notation of pretzel knots.

We use the technique in \cite{MaN, NaNaU}.

\begin{claim}[\cite{MaN}]\label{claim:clasp}
A clasp can leap over a hurdle by a single $\Delta$-move.
\end{claim}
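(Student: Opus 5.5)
The plan is to verify the claim directly from the definition of the $\Delta$-move, as in \cite{MaN}. Here a \emph{clasp} is a pair of crossings between two strands (a Whitehead-type clasp, two oppositely twisted crossings forming a "hook"), and a \emph{hurdle} is a third strand passing entirely over (or entirely under) both strands of the clasp. Thus the configuration lives in a ball meeting the link in three arcs. The claim is that the clasp on one side of the hurdle can be transported to the other side at the cost of one $\Delta$-move, and all other changes are Reidemeister moves.

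First, I would isolate the local picture. One strand $a$ of the clasp hooks around a second strand $b$, and the hurdle strand $c$ crosses over both $a$ and $b$ near the clasp. Sliding the clasp along $b$ past $c$ is blocked only where the tip of the hook of $a$ must pass $c$. The body of the clasp can be moved by Reidemeister II and III moves. Concretely, I would push the hook through $c$ and then track the three crossings formed among $a$ (the tip), $b$, and $c$.

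Second, I would check that at the critical moment these three arcs form a triangle of three crossings, in which $c$ is over both others and $a$ and $b$ have the crossing inherited from the clasp. This is exactly the left picture of Figure \ref{fig:delta}. Applying one $\Delta$-move replaces this triangle with the opposite triangle. After that, Reidemeister II/III moves, which are isotopies, finish carrying the hook beyond $c$. The key point is that before the move, $c$ separates the tip from the rest of the clasp "incorrectly", and after the move, the over/under pattern lets the remaining crossings be isotoped away.

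The main obstacle is bookkeeping. I must confirm that the crossing pattern really matches a $\Delta$-triangle; in a $\Delta$-move each pair of the three strands crosses once, with the cyclic over/under data of Figure \ref{fig:delta}. I must also check that the orientations are irrelevant, since the $\Delta$-move is unoriented-local and all orientation variants are equivalent via Reidemeister moves, as shown in \cite{MaN}. I would handle this by drawing the two-crossing clasp with $c$ over both strands. I would then do a single Reidemeister III move of $c$ across one clasp crossing, which leaves a configuration where $c$ and the two clasp strands bound a triangle face. If that triangle is of $\Delta$-type, the move applies directly. If instead it is of Reidemeister III type, then $c$ already passes freely and no $\Delta$-move is needed. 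Either way, at most one $\Delta$-move suffices.
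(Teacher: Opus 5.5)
Your proposal has a genuine gap: it uses the wrong notion of hurdle, and the triangle it produces is not a $\Delta$-triangle.

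Suppose the third strand $c$ crosses over both clasp strands $a$ and $b$. Then every triangle formed by $c$ and a clasp crossing has $c$ as its top strand. Such a layered triangle is a Reidemeister~III configuration. The left side of Figure~\ref{fig:delta} is instead the \emph{cyclic} triangle, in which each strand is over one of the other two and under the other. You describe exactly this cyclic pattern in your last paragraph, which contradicts your earlier assertion that the triangle with $c$ on top ``is exactly the left picture''.

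In your configuration nothing is blocked. Two Reidemeister~III moves carry $c$ across both clasp crossings, so the clasp passes $c$ by an isotopy, and your dichotomy always lands in the ``passes freely'' branch. That cannot be the content of the claim. In Claim~\ref{claim:change} a crossing change is realized by an isotopy together with one leap per hurdle. In the situation of Claim~\ref{claim:technique}, this crossing change alters $a_2$ by $\frac{1}{2}(\sum_{i=3}^{n}p_i-1)$ (Proposition~\ref{prop:lk}), which is positive once $\sum_{i=3}^{n}p_i>1$. By Proposition~\ref{prop:a2}, the leaps therefore cannot be isotopies.

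The missing case is the one where the hurdle threads the clasp: $c$ crosses one clasp strand over and the other under. Write $X_1$ for the clasp crossing where $a$ is over $b$, and $X_2$ for the one where $b$ is over $a$. Suppose $c$ is over $a$ and under $b$.
\begin{itemize}
\item Moving $c$ across $X_1$ meets the triangle $c>a>b>c$, which is cyclic, so one $\Delta$-move takes $c$ into the bigon of the clasp.
\item Moving it on across $X_2$ meets $b>c>a$, which is layered, so a Reidemeister~III move finishes, and the clasp now lies on the other side of $c$.
\end{itemize}
If instead $c$ is over $b$ and under $a$, the roles of $X_1$ and $X_2$ are exchanged.

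The fact you need, and never use, is that $X_1$ and $X_2$ have opposite over/under data. This forces at most one of the two triangles to be cyclic, and exactly one in the mixed case; that is why a \emph{single} $\Delta$-move suffices. This crossing-data check is what the picture taken from \cite{MaN} in Figure~\ref{fig:clasp} encodes. Once you set up the hurdle this way, your general strategy (isotope to a $\Delta$-triangle, apply one move, isotope away) goes through.
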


The meaning is given in Figure \ref{fig:clasp}. Here, the clasp is vertically standing on the 2-sphere on which the other part of the knot diagram lies (except for crossings). We use Claim \ref{claim:clasp} to show Claim \ref{claim:change}.
\begin{figure}[htbp]
 \centering
 \includegraphics[width=0.6\linewidth]{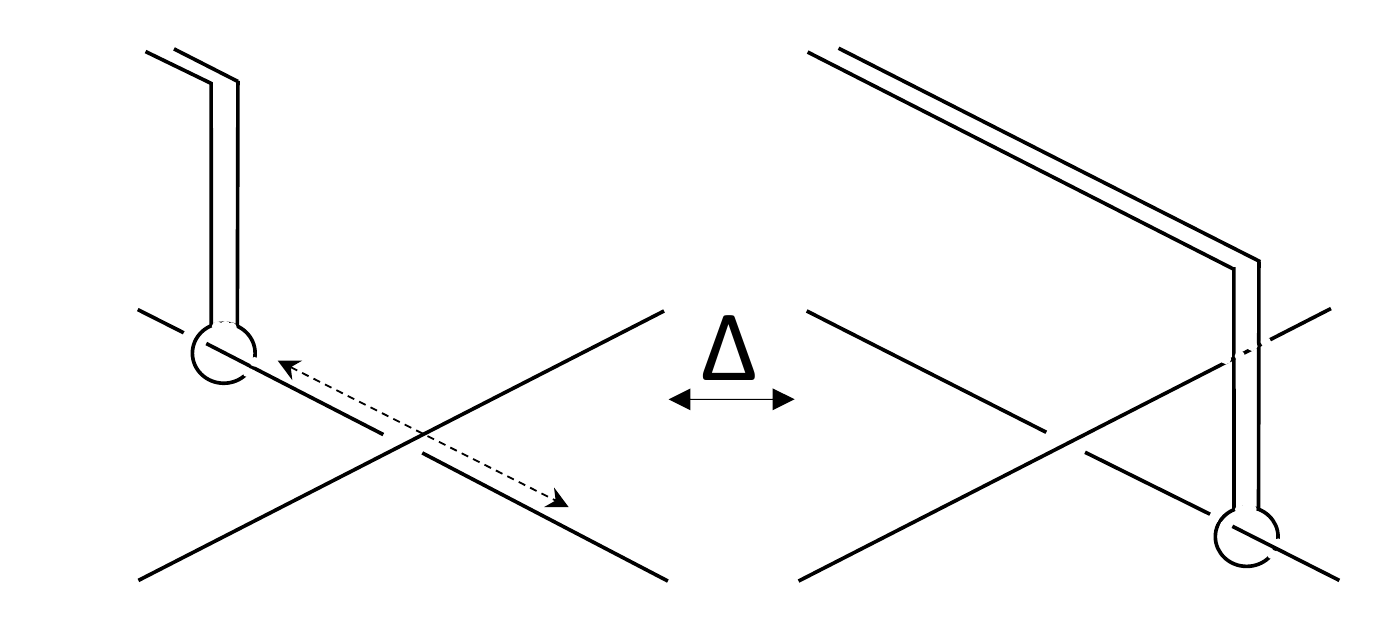}
 \caption{}
 \label{fig:clasp}
\end{figure}

\begin{claim}[\cite{NaNaU}]\label{claim:change}
We can exchange a crossing of a knot by a finite number of $\Delta$-moves.
Moreover, the number of $\Delta$-moves needed is equal to the number of hurdles.
\end{claim}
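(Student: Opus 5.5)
The plan is to realize the crossing change as a clasp that is created locally, carried around one loop of the knot, and then cancelled. Claim~\ref{claim:clasp} is used once for every obstacle the clasp meets on the way. Let $c$ be the crossing to be exchanged. Smoothing $c$ (respecting orientation) splits the diagram of the knot into two closed loops; let $\gamma$ be one of them, viewed as an arc of the knot that starts and ends at $c$. The \emph{hurdles} are the strands of the diagram that $\gamma$ meets away from $c$, i.e.\ the crossings of $\gamma$ with the other loop. I take this as the reading of Figure~\ref{fig:clasp}; the statement of Claim~\ref{claim:change} presupposes such a count.

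First step: near $c$, push a small finger from the over-strand of $c$ alongside $\gamma$. This is a Reidemeister II move, so it costs no $\Delta$-moves. The finger and the arc of $\gamma$ near $c$ then form a clasp, which I stand vertically on the $2$-sphere as in Figure~\ref{fig:clasp}. The key observation is this: if we slide the clasp once around $\gamma$ and bring it back to $c$, then the two crossings of the clasp together with $c$ can be simplified by Reidemeister II and III moves to a single crossing of the opposite sign. This is the familiar picture in which a crossing change is a ``clasp pass'' around the loop. Equivalently, changing $c$ is the same as shrinking $\gamma$ through the other strand and re-expanding it on the other side.

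Second step: carry out the sliding. Between hurdles, the clasp moves along $\gamma$ by planar isotopy and Reidemeister moves. At each hurdle, a strand of the rest of the diagram lies in the clasp's path, and Claim~\ref{claim:clasp} lets the clasp leap over it with exactly one $\Delta$-move. Strands of $\gamma$ itself crossing $\gamma$ can be handled the same way, and they are counted as hurdles if they obstruct. Summing over the journey, the total number of $\Delta$-moves equals the number of hurdles, which proves both statements of Claim~\ref{claim:change}.

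The main obstacle I expect is the bookkeeping that makes ``one $\Delta$-move per hurdle'' exact. Two things must be checked. First, every obstruction met by the vertically standing clasp really is of the configuration in Figure~\ref{fig:clasp}, whether the clasp passes over or under, and whatever the orientation. Second, the final cancellation at $c$ uses only Reidemeister moves. For the first point, I would normalize each hurdle by a Reidemeister II move (pushing the hurdle strand over or under the clasp's path) so that it matches the figure, possibly after mirroring. For the second point, I would draw the local picture at $c$ explicitly.
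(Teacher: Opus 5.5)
There is a genuine gap, and it lies in the two points you flag as bookkeeping: your definition of a hurdle, and the mechanism by which a hurdle costs a $\Delta$-move.

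Counting hurdles as crossings of $\gamma$ with the other loop cannot give a correct exact count. By Proposition~\ref{prop:lk}, exchanging $c$ changes $a_2$ by $lk(k_0)$. Each $\Delta$-move changes $a_2$ by $\pm1$ (this underlies Proposition~\ref{prop:a2}). Hence any sequence of $N$ $\Delta$-moves realizing the exchange has $N\equiv lk(k_0) \pmod 2$. Two closed curves in the plane cross an even number of times, but $lk(k_0)$ can be odd. In the standard diagram of $3_1$, $\gamma$ meets the other loop in two crossings and $lk(k_0)=\pm1$. Your construction would therefore unknot $3_1$ with exactly two $\Delta$-moves, which is impossible. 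With your count, Claim~\ref{claim:technique} would also give $1+\sum_{i\ge3}p_i$ instead of $\frac12(\sum_{i\ge3}p_i-1)$, and the upper bound in the proof of Theorem~\ref{thm:oddone} would be lost.

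The mechanism is also wrong.
\begin{itemize}
\item A Reidemeister II move produces a bigon with the same strand on top at both crossings. So your finger is not hooked to $\gamma$, and no clasp has been created.
\item Even a genuinely hooked clasp sliding along $\gamma$ passes a strand $s$ crossing $\gamma$ by Reidemeister moves alone. Take the hook under $s$ if $s$ is over $\gamma$, and over $s$ if $s$ is under $\gamma$; both triangles are then admissible for Reidemeister III.
\item So in your model the creation, the sliding and the final cancellation are all isotopies. Nothing forces a $\Delta$-move at a crossing, and the claimed R2/R3 simplification to a crossing of the opposite sign cannot exist when $lk(k_0)\neq 0$.
\end{itemize}

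Claim~\ref{claim:clasp} concerns a different configuration: a strand running between the two legs of a vertically standing clasp, passing over one leg and under the other. The argument behind Figure~\ref{fig:tdelta} is as follows.
\begin{itemize}
\item Regard $c$ as a half-twist in a band, and draw the loop as a thin band (a finger) on the sphere.
\item A full twist of this band is a vertical clasp between its two edges. It can be created at the tip of the finger by an isotopy (twisting the tip) and then slid along the band toward $c$.
\item Strands lying over both edges, or under both, are passed by Reidemeister III. A strand going over one edge and under the other is a hurdle, and it costs exactly one $\Delta$-move by Claim~\ref{claim:clasp}.
\item At $c$, the full twist and the half-twist cancel by Reidemeister II, leaving the opposite crossing.
\end{itemize}
A hurdle therefore accounts for a pair of crossings with the loop, not a single crossing. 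When it belongs to the other component it contributes $\pm1$ to $lk(k_0)$. This is why the count respects the parity constraint and can equal $|lk(k_0)|$ in the pretzel application.
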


The meaning and the proof are given in Figure \ref{fig:tdelta}. Here, the clasp leaps over $t$ hurdles by $t$ times $\Delta$-moves. Then, the crossing labeled with the asterisk * is deformed into the opposite crossing.
\begin{figure}[htbp]
 \centering
 \includegraphics[width=1\linewidth]{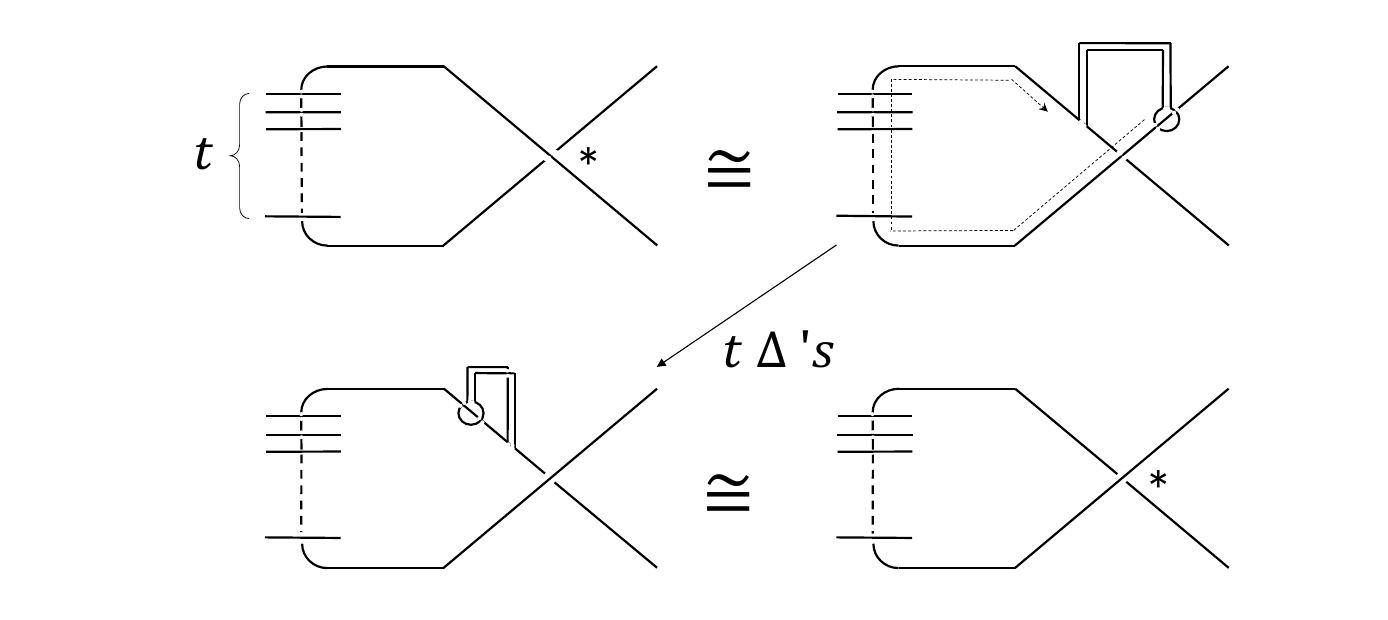}
 \caption{}
 \label{fig:tdelta}
\end{figure}

In particular, Claim \ref{claim:technique} holds for a class of pretzel knots.
\begin{claim}\label{claim:technique}
Let $P(-1, p_2, ... , p_n)$ be a pretzel knot of odd type, where $p_i$ is a positive odd integer for $2 \leq i \leq n$ and $n$ is odd.
In Figure \ref{fig:ccpretzel}, if we perform $\frac{1}{2} ( \sum_{i=3}^{n}p_i - 1 )$ times $\Delta$-moves, we can exchange the crossing labeled with the asterisk *.
Then, we have 
$d_G^{\Delta}(P(-1, \underline{p_2}, p_3, ... , p_n), P(-1, \underline{p_2-2}, p_3, ... , p_n)) \leq \frac{1}{2} ( \sum_{i=3}^{n}p_i - 1 )$.
\end{claim}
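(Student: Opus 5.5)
We apply Claim~\ref{claim:change} to the crossing marked * in Figure~\ref{fig:ccpretzel}. It then suffices to count the hurdles, since Claim~\ref{claim:change} says the number of $\Delta$-moves needed to exchange * equals the number of hurdles the clasp must leap over.

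First we isolate the clasp. Take * to be one crossing in the second twist region, the one with $p_2>0$ half-twists. Exchanging it turns a pair of adjacent half-twists into a canceling pair, so after a Reidemeister II move the second band has $p_2-2$ half-twists. The exchange therefore produces $P(-1,p_2-2,p_3,\dots,p_n)$. Following Figure~\ref{fig:tdelta}, we view the two strands at * as a clasp standing vertically on the sphere. We then drag one strand of the clasp along the band toward the neighbouring bands until it can be released on the other side.

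Next we count the hurdles. The clasp's path has to pass the twist regions of $p_3,\dots,p_n$, where the strands of the knot cross it. Here the $-1$ band is used: the clasp is routed around through the first band, and passing the single crossing of $P(-1,\dots)$ lets it avoid part of the obstruction. The count I aim for is that the $\sum_{i=3}^n p_i$ crossings in bands $3,\dots,n$ contribute one hurdle per two half-twists, roughly $\frac12(p_i\pm1)$ per band. After the contribution of the $-1$ band is absorbed, the total is $\frac12(\sum_{i=3}^n p_i-1)$. This is an integer, because $n-2$ is odd and each $p_i$ is odd, so $\sum_{i=3}^n p_i$ is odd.

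Finally we conclude the distance bound. Each hurdle costs one $\Delta$-move by Claim~\ref{claim:clasp}, and afterwards the clasp lies on the other side with * reversed. So $d_G^{\Delta}(P(-1,p_2,\dots,p_n),P(-1,p_2-2,\dots,p_n))\le \frac12(\sum_{i=3}^n p_i-1)$.

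The main obstacle is the hurdle count: choosing the route of the clasp in the figure so that it meets exactly $\frac12(\sum_{i\ge3}p_i-1)$ hurdles, and checking that the crossings actually form hurdles of the kind shown in Figure~\ref{fig:clasp}. I would first try a small case, $P(-1,p_2,p_3)$, where the bound should be $\frac12(p_3-1)$, to fix the route, and then extend to general $n$ by induction on the number of bands passed.
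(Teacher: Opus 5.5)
Your framework is the paper's: apply Claim~\ref{claim:change} to the crossing $*$ in the second band and count the hurdles along the clasp's route, which is all that Figure~\ref{fig:ccpretzel} does. But the proposal stops exactly where the proof has to start. The count appears only as a target (``the count I aim for'', ``roughly $\frac{1}{2}(p_i\pm1)$ per band'', ``after the contribution of the $-1$ band is absorbed''). The route is never fixed, so nothing is verified.

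There is also no slack to work with. Smoothing $*$ gives the two-component link $P(-1,p_3,\dots,p_n)$, plus kinks from band $2$, with linking number $\lambda=\frac{1}{2}(\sum_{i\ge 3}p_i-1)$. By Propositions~\ref{prop:lk} and~\ref{prop:a2}, every route therefore costs at least $\lambda$ $\Delta$-moves, with the same parity. The claim is that some route attains this bound exactly, and a careless route gives $\lambda+2$ or more.

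\textbf{What you need to check.}
\begin{enumerate}
\item \emph{The route.} The smoothed link has an even number of odd bands, so each band carries one strand of each loop. Hence the clasp, travelling along one loop, crosses each of bands $1,3,\dots,n$ once and meets all of their crossings.
\item \emph{The choice of $*$.} You must take $*$ at the correct end of band $2$, so that its other $p_2-1$ crossings lie on the other loop. Any of them left on the route is a self-crossing met twice, and it adds a hurdle.
\item \emph{What a hurdle is.} With the clasp standing on top, a hurdle is a crossing where the other loop passes over the route. Crossings where the route passes over are ridden over for free.
\item \emph{The sign condition.} The linking number is the signed count of exactly these crossings. So the number of hurdles equals $\lambda$ if and only if all hurdles are positive. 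That means the lone negative crossing, from the $-1$ band, must be one where the route passes over.
\end{enumerate}

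\textbf{The count.} In band $i\ge 3$ the route alternates over and under through $p_i$ crossings, so it meets $\frac{1}{2}(p_i+1)$ or $\frac{1}{2}(p_i-1)$ hurdles. The two values alternate from band to band, because the route uses the two diagonals of successive bands alternately. There are $n-2$ such bands, an odd number, so the two choices of loop (or side) give $\frac{1}{2}(\sum_{i\ge3}p_i-1)$ or $\frac{1}{2}(\sum_{i\ge3}p_i+1)$ hurdles in the positive bands.
\begin{itemize}
\item In the first case the $-1$ crossing is not a hurdle, and the total is $\lambda$.
\item In the second case it is a hurdle, and the total is $\lambda+2$.
\end{itemize}
So the $-1$ band does not ``absorb'' a hurdle; hurdle counts cannot be negative. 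Its role is to be the one crossing that the correct route passes over. This direct count, with no induction needed, is the actual content of the claim.
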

\begin{figure}
    \centering
    \includegraphics[width=1\linewidth]{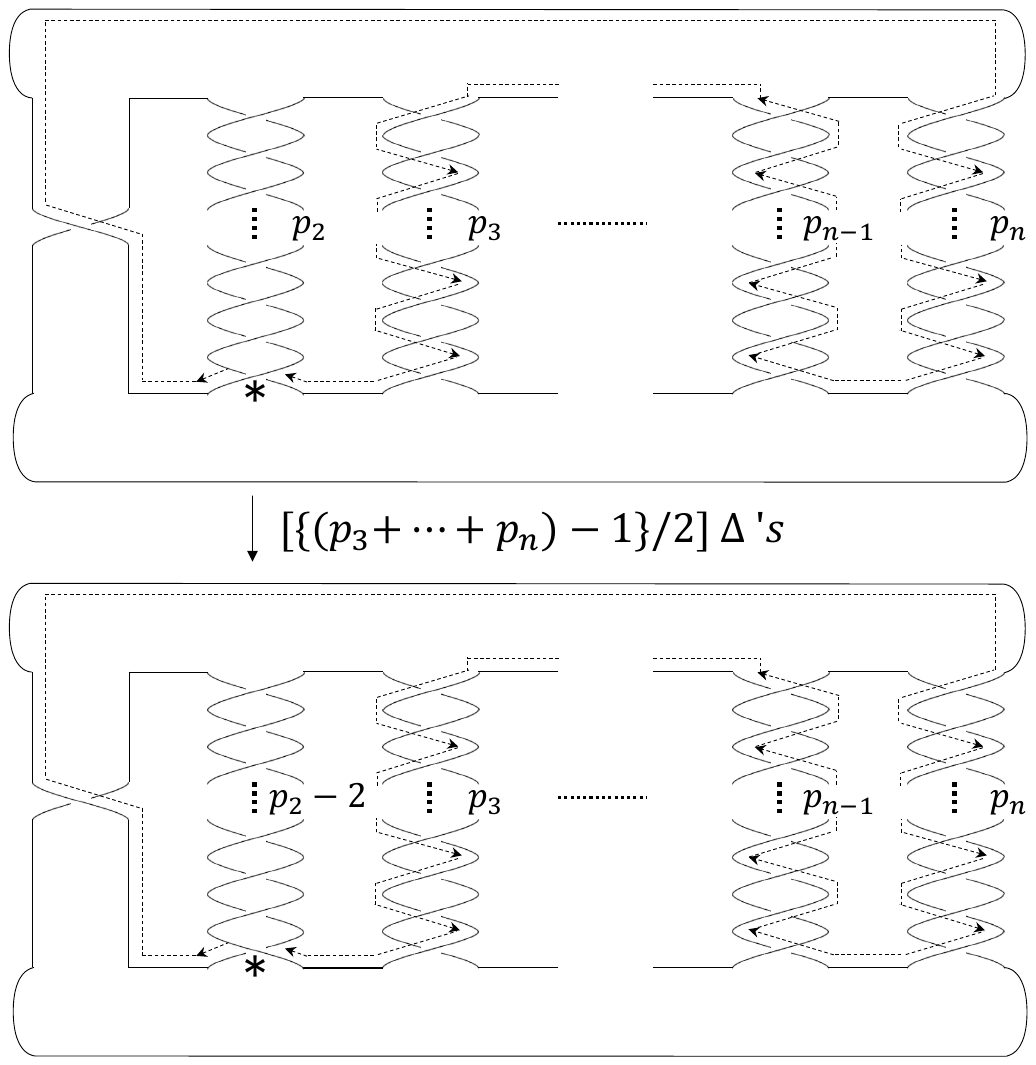}
    \caption{}
    \label{fig:ccpretzel}
\end{figure}

In this paper, we use Propositions \ref{prop:lk}, \ref{prop:a2}, \ref{prop:torus}, and \ref{prop:pretzel}.

\begin{proposition}\label{prop:lk}
Let $k_+$ be a knot, $k_-$ the knot obtained from $k_+$ by exchanging a positive crossing into a negative crossing, and $k_0$ a $2$-component link obtained from $k_+$ by smoothing at the crossing. Then $a_2(k_+)-a_2(k_-) = lk(k_0)$.
\end{proposition}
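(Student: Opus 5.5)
We prove Proposition~\ref{prop:lk} from the skein relation of the Conway polynomial,
\[
\nabla_{k_+}(z) - \nabla_{k_-}(z) = z\,\nabla_{k_0}(z),
\]
by comparing the coefficients of $z^2$ on both sides. The only extra input needed is the coefficient of $z$ in the Conway polynomial of a $2$-component link.

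\emph{Step 1: the knots.} Since $k_+$ and $k_-$ are knots, their Conway polynomials are even polynomials in $z$ with constant term $1$:
\[
\nabla_{k_\pm}(z) = 1 + a_2(k_\pm) z^2 + \cdots .
\]
Hence the coefficient of $z^2$ on the left-hand side of the skein relation is $a_2(k_+) - a_2(k_-)$.

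\emph{Step 2: the $2$-component link.} For the link $k_0 = K_1 \cup K_2$ we have
\[
\nabla_{k_0}(z) = c_1 z + c_3 z^3 + \cdots ,
\]
so the coefficient of $z^2$ in $z\nabla_{k_0}(z)$ is $c_1$. It therefore suffices to show $c_1 = lk(k_0)$. This is Hoste's classical fact; to stay self-contained, I would prove it by induction on the number of crossings between $K_1$ and $K_2$ that must be changed to split the link.

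\begin{itemize}
\item \emph{Base case.} For a split link $\nabla = 0$, so $c_1 = 0 = lk$.
\item \emph{Inductive step.} Apply the skein relation at an inter-component crossing $c$ of $k_0$. Changing $c$ from positive to negative lowers $lk$ by exactly $1$. Smoothing $c$ merges the two components into a knot $J$, so $z\nabla_J(z) = z + O(z^3)$.
\item \emph{Comparing $z$-coefficients.} Writing $L_+$ and $L_-$ for $k_0$ with $c$ positive and negative respectively, we get $c_1(L_+) - c_1(L_-) = 1$, which matches $lk(L_+) - lk(L_-) = 1$.
\end{itemize}

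Since crossing changes between different components reach a split link, the induction closes. This gives $c_1 = lk(k_0)$, and comparing $z^2$-coefficients in Steps 1 and 2 yields $a_2(k_+) - a_2(k_-) = lk(k_0)$.

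The one point requiring care is orientation and sign conventions. The smoothing must be the oriented one, so that $k_0$ inherits an orientation. The skein relation's sign must be matched with the convention that a positive crossing contributes $+\tfrac12$ to the linking number. I expect no real obstacle beyond keeping these conventions consistent.
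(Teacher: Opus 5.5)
Your proof is correct. It is the standard skein-theoretic argument: compare $z^2$-coefficients in $\nabla_{k_+}-\nabla_{k_-}=z\nabla_{k_0}$, using that the $z$-coefficient of a $2$-component link's Conway polynomial is its linking number. The paper gives no proof of its own and simply defers to this argument in Kauffman's \emph{On knots}, Chap.~III.
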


The proof can be found in [2, Chap. III].

\begin{proposition}[\cite{Oka1}]\label{prop:a2}
For any two knots $K$ and $K^{'}$, the difference $d_G^{\Delta}(K,K^{'}) - |a_2(K) - a_2(K^{'})|$ is a non-negative even integer. 
In particular, the difference $u^{\Delta}(K) -|a_2(K)|$ is also a non-negative even integer.
\end{proposition}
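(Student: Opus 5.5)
The plan is to reduce the statement to the single fact that \emph{one $\Delta$-move changes $a_2$ by exactly $\pm 1$}. Granting this, suppose $K$ and $K'$ are joined by a sequence of $d=d_G^{\Delta}(K,K')$ $\Delta$-moves $K=K_0\to K_1\to\cdots\to K_d=K'$. Then $a_2(K')-a_2(K)=\sum_{j=1}^{d}\varepsilon_j$ with each $\varepsilon_j\in\{\pm1\}$. So $|a_2(K)-a_2(K')|\le d$, and $d-|a_2(K)-a_2(K')|$ equals twice the number of $\varepsilon_j$ having the sign opposite to the total. Hence it is a non-negative even integer. The second sentence of the proposition is the special case $K'=O$, since $a_2(O)=0$.

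To prove that one $\Delta$-move changes $a_2$ by $\pm1$, I would use Proposition~\ref{prop:lk}. Let $K$ and $K'$ differ by a $\Delta$-move supported in a ball $B$. Inside $B$ there are three strands forming a triangle with crossings $c_1,c_2,c_3$, and corresponding crossings $c_1',c_2',c_3'$ in $K'$. The key local observation is the following. If we change the crossing $c_1$ in $K$ and the corresponding crossing $c_1'$ in $K'$, the two resulting tangles in $B$ become related by a Reidemeister III move. This holds because the "middle" strand now passes entirely over (or under) one of the others, so the triangle can be slid across. Hence the knots $K^\ast$ and $K'^\ast$ obtained by these crossing changes are isotopic. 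Moreover $c_1$ and $c_1'$ have the same sign, since the $\Delta$-move does not change the orientations or the over/under data at the corresponding crossing. Applying Proposition~\ref{prop:lk} to both gives
\begin{equation*}
a_2(K)-a_2(K') = \pm\bigl(lk(L_0)-lk(L_0')\bigr),
\end{equation*}
where $L_0$ and $L_0'$ are the $2$-component links obtained by smoothing $K$ at $c_1$ and $K'$ at $c_1'$.

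The remaining task is to compare $L_0$ and $L_0'$. Outside $B$ they coincide. Inside $B$, after smoothing, the remaining two crossings $c_2,c_3$ (respectively $c_2',c_3'$) involve the third strand and the two arcs from the smoothing. Up to Reidemeister moves inside $B$, the two tangles differ by changing one crossing between the two components. This changes the linking number by exactly $\pm1$, giving $a_2(K)-a_2(K')=\pm1$.

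I expect this local comparison to be the main obstacle. One must keep track of which arcs belong to which component of $L_0$, and check that the difference really is a single inter-component crossing rather than two crossings whose contributions cancel. The check also depends on the orientations of the three strands of the $\Delta$-move. I would first reduce the orientation cases using the rotational symmetry of the $\Delta$-move picture in Figure~\ref{fig:delta}. Then I would verify the linking number change case by case, computing $lk$ as half the signed count of the inter-component crossings in $B$.
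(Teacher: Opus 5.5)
The paper gives no proof of this proposition; it only cites Okada \cite{Oka1}, so there is no internal argument to compare with. Your proposal is a correct, self-contained proof along the standard skein-theoretic lines.

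The reduction is right: if $p$ and $m$ count the steps with $\varepsilon_j=+1$ and $\varepsilon_j=-1$, then $d-|a_2(K)-a_2(K')|=2\min(p,m)$. Changing the corner crossing $c_1$ on both sides does give Reidemeister~III--related diagrams. The reason is that a $\Delta$-move keeps the over/under relation of each pair of strands, which is cyclic on both sides, and changing one crossing turns it into the same linear order on both sides. The same observation gives $\operatorname{sign}(c_1)=\operatorname{sign}(c_1')$. So $a_2(K)-a_2(K')=\pm\bigl(lk(L_0)-lk(L_0')\bigr)$ follows from Proposition~\ref{prop:lk}.

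Do not rely on your sentence saying that, up to Reidemeister moves in $B$, the smoothed tangles differ by a single crossing change between the two components. That is false for some orientations. When the arcs through $c_2$ and $c_3$ lie on the same component after smoothing, $b$ clasps that component in $L_0$ and clasps the \emph{other} component in $L_0'$. Passing from one to the other therefore also needs a self-crossing change. Only the linking-number statement holds, and your proposed signed count proves it.

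Here is that count. Put $a$ on $y=t$, $b$ on $y=x$ and $c$ on $y=-x$, with $t>0$ for $K$ and $t<0$ for $K'$. Use the data $a$ over $b$, $b$ over $c$, $c$ over $a$, and orient the strands by $\alpha(1,0)$, $\beta(1,1)$, $\gamma(-1,1)$ with $\alpha,\beta,\gamma=\pm1$.

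Smoothing at $c_1=a\cap c$ joins incoming $a$ to outgoing $c$ (component $X$) and incoming $c$ to outgoing $a$ (component $Y$). In $L_0$, the arc through $c_2=a\cap b$ lies on $Y$ iff $\alpha=1$, and the arc through $c_3=b\cap c$ lies on $Y$ iff $\gamma=1$. In $L_0'$ both switch components, because the order of $c_1,c_2$ along $a$ and of $c_1,c_3$ along $c$ is reversed.

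Let $b$ lie on $W\in\{X,Y\}$ and let $\sigma_i$ be the common sign of $c_i$ and $c_i'$. Then
\[
lk(L_0)-lk(L_0')=\frac{1}{2}\bigl(\eta_2\sigma_2+\eta_3\sigma_3\bigr),
\]
where $\eta_i=1$ if $c_i$ is an inter-component crossing of $L_0$ (so $c_i'$ is a self-crossing of $L_0'$), and $\eta_i=-1$ otherwise. This is $\pm1$ iff $\eta_2\eta_3=\sigma_2\sigma_3$, and $0$ otherwise.

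Now $\eta_2\eta_3=1$ iff $c_2$ and $c_3$ lie on the same component, that is, iff $\alpha=\gamma$. Taking the sign of a crossing to be the sign of (over direction)$\times$(under direction) gives $\sigma_2=\alpha\beta$ and $\sigma_3=\beta\gamma$, so $\sigma_2\sigma_3=\alpha\gamma$. This product does not depend on the sign convention. Hence the difference is always $\pm1$, whatever $W$ is, and this completes your argument.
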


\begin{proposition}[\cite{NaNaU}]\label{prop:torus}
Let $T(p,q)$ be the torus knot of type $(p,q)$ for a pair of positive integers $p,q$ with $\gcd(p,q)=1$.
Then we have 
\[
u^{\Delta}(T(p,q)) = \frac{1}{24} (p^2-1)(q^2-1) \quad \big( = a_2(T(p,q)) \big).
\]
\end{proposition}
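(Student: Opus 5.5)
The plan is to prove the two inequalities $u^{\Delta}(T(p,q)) \geq a_2(T(p,q))$ and $u^{\Delta}(T(p,q)) \leq a_2(T(p,q))$, together with the explicit evaluation $a_2(T(p,q)) = \frac{1}{24}(p^2-1)(q^2-1)$. The lower bound is immediate from Okada's inequality in Proposition~\ref{prop:a2} once $a_2$ is known to be nonnegative. So the work is (a) computing $a_2$ and (b) building an explicit sequence of $\Delta$-moves whose length equals $a_2$.

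For (a), I would use the Alexander polynomial $\Delta_{T(p,q)}(t) = \frac{(t^{pq}-1)(t-1)}{(t^p-1)(t^q-1)}$. I would write it in symmetrized form with $t = e^{u}$, so that
\[
\Delta(t) = \frac{\sinh(pqu/2)\,\sinh(u/2)}{\sinh(pu/2)\,\sinh(qu/2)},
\]
and use $z = t^{1/2}-t^{-1/2} = 2\sinh(u/2) = u + O(u^3)$. Expanding each factor as $\sinh(au/2) = (au/2)\bigl(1 + a^2u^2/24 + O(u^4)\bigr)$ gives
\[
\nabla(z) = 1 + \frac{p^2q^2 + 1 - p^2 - q^2}{24}\,z^2 + O(z^4),
\]
so $a_2 = \frac{1}{24}(p^2-1)(q^2-1) \geq 0$.

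For (b), I would represent $T(p,q)$, with $p<q$, as the closure of the positive braid $(\sigma_1\cdots\sigma_{p-1})^q$. I would unknot it by a sequence of crossing changes, each positive to negative, each realized via Claim~\ref{claim:change} by a clasp leaping over hurdles. The key bookkeeping idea is as follows. Suppose that at each step the number of hurdles $t$ equals $lk(k_0)$ for the smoothing $k_0$ at the changed crossing. This holds when every crossing between the two components of $k_0$ other than the clasp is positive and is crossed as a hurdle exactly once. Then Proposition~\ref{prop:lk} makes the total number of $\Delta$-moves telescope:
\[
\sum_{\text{steps}} lk(k_0) = \sum_{\text{steps}} \bigl(a_2(k_+)-a_2(k_-)\bigr) = a_2(T(p,q)) - a_2(O).
\]
Concretely, I would proceed by induction on $p+q$ in Euclid fashion. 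I would pull one strand of the braid off by changing the crossings where it passes over the others, thereby reducing $T(p,q)$ to $T(p-1,\cdot)$ or to $T(p, q-p)$ up to a positive-braid isotopy. At each change the intermediate diagram remains positive away from the clasp, which is what makes the hurdle count equal to the linking number.

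The main obstacle is exactly this last point: arranging the order of crossing changes so that each intermediate diagram is positive, so that the clasp's hurdles are all crossings of the right sign and $t = lk(k_0)$. If a direct choice fails, I would first try an alternative ordering: change crossings in the last strand from the bottom of the braid upward, which keeps the remaining part a positive braid. Even if the count is only shown to be at most $a_2 + 2m$ for some slack, Proposition~\ref{prop:a2} gives parity but not equality. Hence the hurdle count genuinely must match $lk(k_0)$ at each step, and I would verify this carefully on small cases, such as $T(2,q)$ and $T(3,4)$, before the induction.
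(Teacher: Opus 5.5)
You compute $a_2$ correctly: the $\sinh$ expansion does give $\frac{1}{24}(p^2-1)(q^2-1)$. With it, the lower bound $u^{\Delta}(T(p,q))\ge a_2(T(p,q))$ follows from Proposition~\ref{prop:a2}. Your overall framework is also the natural one: crossing changes realized by Claim~\ref{claim:change}, with the cost telescoped through Proposition~\ref{prop:lk}. The paper does not reprove this proposition; it quotes it from \cite{NaNaU}.

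The whole content of the statement is the upper bound, and there your proposal has a genuine gap: you never produce a sequence of $\Delta$-moves of length $a_2(T(p,q))$. A single $\Delta$-move changes $a_2$ by exactly $\pm1$ (Okada; this is the case $d_G^{\Delta}=1$ of Proposition~\ref{prop:a2}). So a crossing change realized with $t$ hurdles always has $t\ge |lk(k_0)|$, and your telescoping sum gives the right total only if $t=lk(k_0)$ at \emph{every} step. That equality is exactly what has to be proved, and you leave it open (``if a direct choice fails, I would first try an alternative ordering''). No diagram, no order of crossing changes and no route for the clasp is fixed, so no hurdle count is ever verified.

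The mechanism you suggest for obtaining $t=lk(k_0)$ also does not work as stated.

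First, $lk(k_0)$ is half the signed number of crossings between the two components of $k_0$. If each such crossing were one hurdle you would get $t=2\,lk(k_0)$. Compare Claim~\ref{claim:technique}: there $k_0$ has $1+\sum_{i\ge 3}p_i$ crossings between its components, while the number of hurdles is $\frac12(\sum_{i\ge3}p_i-1)=lk(k_0)$. Which strands become hurdles depends on the route of the clasp and on over/under data, and designing that route is the real work.

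Second, each switch introduces a negative crossing. So ``the intermediate diagram remains positive'' needs an explicit isotopy absorbing that crossing before the next switch, and the next hurdle count must be made in the resulting diagram.

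Third, the induction is not set up. Deleting one strand from $(\sigma_1\cdots\sigma_{p-1})^q$ leaves $(\sigma_1\cdots\sigma_{p-2})^{q-m}$, where $m$ is the number of times that strand is the one moving from position $1$ to position $p$. This is followed by a partial-cycle braid $(\sigma_i\sigma_{i+1}\cdots\sigma_j)^{\pm1}$ coming from reconnecting the closure. The result is a power of $\sigma_1\cdots\sigma_{p-2}$ only when $q\equiv\pm1 \pmod p$ (it already fails for $p=5$, $q=7$). So in general you do not land on a torus-knot diagram and cannot invoke the induction hypothesis.

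The reduction $T(p,q)\to T(p,q-p)$, undoing a full twist, does stay among torus knots. But then you must exhibit the switches and clasp routes and check that their hurdle counts add up to $a_2(T(p,q))-a_2(T(p,q-p))=\frac{1}{24}p(p^2-1)(2q-p)$. Without such an explicit construction, the proposal proves only $u^{\Delta}(T(p,q))\ge a_2(T(p,q))$.
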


\begin{proposition}[\cite{NaNaU}]\label{prop:pretzel}
Let $P=P(p_1, p_2, ... , p_n)$ be a positive pretzel knot.
Then, we have $u^{\Delta}(P) = a_2(P)$.
\end{proposition}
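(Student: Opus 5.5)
The statement to prove is Proposition~\ref{prop:pretzel}: for a positive pretzel knot $P$ we have $u^{\Delta}(P)=a_2(P)$. It is attributed to \cite{NaNaU}. In this paper it could be taken as known, but I would reprove it using the same technique that appears in Claims~\ref{claim:change} and~\ref{claim:technique}. The lower bound $u^{\Delta}(P)\ge |a_2(P)|$ is Proposition~\ref{prop:a2}, and a positive diagram has $a_2(P)\ge 0$. So the real task is the upper bound $u^{\Delta}(P)\le a_2(P)$. I would prove it by induction on the total number of crossings $\sum|p_i|$, unknotting step by step with crossing changes whose $\Delta$-cost is controlled by Claim~\ref{claim:change}.

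\textbf{Odd type.} Suppose all $p_i$ are positive and odd and $n$ is odd. If some $p_j\ge 3$, I change one crossing in the $j$-th twist region. In the standard diagram, the clasp formed there has to leap over the strands of the other twist regions lying on one side. I would choose the side so that the number of hurdles equals $\frac12\bigl(\sum_{i\ne j}p_i-1\bigr)$, in analogy with Claim~\ref{claim:technique}. This gives
\[
d_G^{\Delta}\bigl(P(\dots,p_j,\dots),P(\dots,p_j-2,\dots)\bigr)\le \tfrac12\Bigl(\sum_{i\ne j}p_i-1\Bigr).
\]
By Lemma~\ref{lemma:odd} (or Proposition~\ref{prop:lk}, via the linking number of the smoothing), the difference of $a_2$ is $\frac14\cdot 2\sum_{i\ne j}p_i$ minus a correction. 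I would check that this equals the number of hurdles exactly, so that each step is tight. When every $p_j=1$, the knot is $P(1,\dots,1)=T(2,n)$, and Proposition~\ref{prop:torus} gives $u^{\Delta}=\frac{n^2-1}{8}=a_2$, which is consistent with Lemma~\ref{lemma:odd}. Alternatively, I would reduce $p_j=1$ to $p_j=-1$ and then cancel a $\pm1$ pair, which lowers $n$ by $2$. Summing the tight steps gives $u^{\Delta}(P)\le a_2(P)$.

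\textbf{Even type.} For type $A$ ($p_1>0$ even, $n$ even), I would repeatedly change a crossing in the even band, reducing $p_1$ by $2$, until $p_1=0$. The band with $p_1=0$ splits the diagram, so the knot becomes a connected sum of $(2,p_i)$-torus knots; alternatively I would end at $p_1=2$ with a twist knot. In either case Proposition~\ref{prop:torus} (plus additivity of $a_2$ under connected sum, with subadditivity of $u^{\Delta}$) finishes. At each step I count hurdles and compare with the difference of $a_2$ from Lemma~\ref{lemma:even}. Type $B$ ($p_1<0$ even, $n$ odd) is handled the same way, increasing $p_1$ toward $0$ or reducing the odd bands.

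\textbf{Main obstacle.} The hard part is the hurdle count. I must exhibit, in the standard pretzel diagram, a placement of the clasp for which the number of hurdles matches $a_2(k_+)-a_2(k_-)=lk(k_0)$ exactly; a crude count overshoots. My first attempt would compute $lk(k_0)$ for the two-component smoothing directly. It is half the signed count of crossings between the two components, namely the bands other than the $j$-th, with $\pm1$ adjustments. I would then route the clasp around the side of the diagram whose crossings with the clasp path are precisely the inter-component crossings of one sign. The parity statement in Proposition~\ref{prop:a2} guarantees that any surplus would be even, which helps confirm the count.
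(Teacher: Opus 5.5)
Your scheme matches the clasp-and-hurdle technique of \cite{NaNaU}. That is: Proposition~\ref{prop:a2} for $u^{\Delta}(P)\ge a_2(P)$, and for the reverse inequality a telescoping chain of crossing changes inside twist regions, each realized by Claim~\ref{claim:change} at cost $a_2(k_+)-a_2(k_-)=lk(k_0)$, ending at $T(2,n)$ or at $T(2,p_2)\#\cdots\#T(2,p_n)$. The paper only cites that technique for this proposition and imitates it in Claim~\ref{claim:technique} and Theorem~\ref{thm:oddone}. However, the one nontrivial step, $d_G^{\Delta}(k_+,k_-)\le lk(k_0)$ for the crossing you change, is never established. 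The explicit version you write down is false. In the odd type, $\sum_{i\ne j}p_i$ is a sum of $n-1$ (an even number of) odd integers, so $\frac12(\sum_{i\ne j}p_i-1)$ is not an integer. Moreover, any bound below $a_2(k_+)-a_2(k_-)=\frac12\sum_{i\ne j}p_i$ would contradict Proposition~\ref{prop:a2}. The $-1$ in Claim~\ref{claim:technique} is just the band $p_1=-1$ inside $\frac12\sum_{i\ne 2}p_i$; in the positive case there is no correction term. Your heuristic also does not locate the right hurdles. In a positive diagram all $\sum_{i\ne j}p_i=2\,lk(k_0)$ inter-component crossings have the same sign, so routing the clasp past ``the inter-component crossings of one sign'' gives twice the target. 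The parity statement in Proposition~\ref{prop:a2} cannot certify an upper bound either.

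What halves these crossings is over/under, not sign. Take $c$ at an end of its twist region. In the odd type, and for the even band in type~B, the two strands of a band are antiparallel, so the oriented smoothing is horizontal. The loop $\ell$ on the side of $c$ containing no further crossings of that band then runs exactly once through every other band. In type~A all bands are parallel, the smoothing is vertical, $k_0=P(p_1-1,p_2,\dots,p_n)$, and each loop runs once through every band. Either way $\ell$ has no self-crossings. So it bounds an embedded disk (its planar region together with a vertical curtain hung below $\ell$) meeting the other loop $\ell'$ exactly at the crossings where $\ell'$ passes under $\ell$. Since $lk(\ell,\ell')$ is the signed count of those crossings and every sign is $+1$, there are exactly $lk(k_0)$ of them. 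That is $\frac12\sum_{i\ne j}p_i$, $\frac12\{(p_1-1)+\sum_{i\ge 2}p_i\}$ and $\frac12\sum_{i\ge 2}p_i$ in the three cases, matching Lemmas~\ref{lemma:odd} and~\ref{lemma:even}. You still have to verify, against Figure~\ref{fig:tdelta}, that these strands are precisely the hurdles the clasp must leap when pushed around $\ell$, and that nothing else costs a $\Delta$-move. After the change, the new crossing cancels its neighbour in the twist region by a Reidemeister~II move. You are then back to a positive pretzel diagram with $|p_j|$ lowered by $2$, and the induction closes. (Minor: $P(2,p_2,\dots,p_n)$ is not a twist knot in general; end the even-type reduction at $p_1=0$.)
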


Since $a_2(K)=a_2(K^*)$ and $u^{\Delta}(K)=u^{\Delta}(K^*)$, where $K^*$ denotes the mirror image of a knot $K$, we do not distinguish a knot from its mirror image in this paper.

\section{Proofs of Theorems \ref{thm:odd} and \ref{thm:even}}\label{sec3}

In this section, we give proofs of Theorems~\ref{thm:odd} and~\ref{thm:even}.
To this end, we first establish Lemmas~\ref{lemma:odd} and~\ref{lemma:even} by applying Proposition~\ref{prop:lk}.
We begin with the following lemma.

\subsection{Proof of Lemma \ref{lemma:odd}}
\label{subsec31}

\oddl*
\begin{proof}
If $n=1$, then $P=P(p_1)$ is the trivial knot $O$, and hence $a_2(P)=0$. In the following, we assume $n \geq 3$.

First, we consider the case where $p_i > 0$ for $1 \leq i \leq n$.
Let $k_+ = P(\underline{p_1}, p_2, ... , p_n)$ and $k_- = P(\underline{p_1-2}, p_2, ... , p_n)$.

By Proposition \ref{prop:lk}, we have $a_2(P(\underline{p_1}, p_2, ... , p_n)) - a_2(P(\underline{p_1-2}, p_2, ... , p_n)) = \frac{1}{2} \sum_{i=2}^{n} p_i$.

By repeating the same computation $\frac{1}{2}(p_1 - 1)$ times, we have
\begin{align*}
a_2(P(\underline{p_1}, p_2, ... , p_n))
& - a_2(P(\underline{p_1-2}, p_2, ..., p_n)) = \frac{1}{2}  \sum_{i=2}^{n} p_i \quad, ... , \\
a_2(P(\underline{3}, p_2, ... , p_n)) 
& - a_2(P(\underline{1}, p_2, ... , p_n)) = \frac{1}{2}  \sum_{i=2}^{n} p_i.
\end{align*}

By continuing with another $\frac{1}{2} (p_2 - 1)$ steps, we have
\begin{align*}
a_2(P(1, \underline{p_2}, p_3, ... , p_n)) 
& - a_2(P(1, \underline{p_2-2}, p_3, ... , p_n)) 
= \frac{1}{2} (1 +\sum_{i=3}^{n} p_i) \quad, ... , \\
a_2(P(1, \underline{3}, p_3, ... , p_n)) 
& - a_2(P(1, \underline{1}, p_3, ... , p_n)) 
= \frac{1}{2} (1 + \sum_{i=3}^{n} p_i).
\end{align*}

Proceeding further, we have 
\begin{align*}
a_2(P(\overbrace{1, 1, ..., 1}^{\text{$n-1$}}, \underline{p_n}))
& - a_2(P(\overbrace{1, 1, ... , 1}^{\text{$n-1$}}, \underline{p_n-2}))
= \frac{1}{2} ( n - 1 ) 
\quad, ... ,  \\
a_2(P(\overbrace{1, 1, ... , 1}^{\text{$n-1$}}, \underline{3})) 
& - a_2(P(\overbrace{1, 1, ... , 1}^{\text{$n-1$}}, \underline{1})) 
= \frac{1}{2} ( n - 1 ).
\end{align*}

Here, $P(\overbrace{1, 1, ... , 1}^{\text{$n$}}) \cong T(2, n)$, hence $a_2(P(\overbrace{1, 1, ... , 1}^{\text{$n$}}) ) = \frac{1}{8} ( n^2 - 1)$.

By summing up, we obtain 
\begin{align*}
a_2(P(p_1, p_2, ... , p_n)) 
& = \frac{1}{4} \sum_{1=i<j}^{n} p_{i}p_{j} - \frac{1}{8} (n-1)n + \frac{1}{8} ( n^2 - 1)\\
& = \frac{1}{4} \sum_{1=i<j}^{n} p_{i}p_{j} + \frac{1}{8} (n-1).
\end{align*}

In particular, in the case where $p_i > 0$, we set $k_+ = P(\overbrace{1, ... , 1}^{\text{$i-1$}}, \underline{p_i}, p_{i+1}, ... , p_n)$ and $k_- = P(\overbrace{1, ... , 1}^{\text{$i-1$}}, \underline{p_i - 2}, p_{i+1}, ..., p_n)$.

By Proposition \ref{prop:lk}, we have $a_2(P(\overbrace{1, ... , 1}^{\text{$i-1$}}, \underline{p_i}, p_{i+1}, ... , p_n))
- a_2(P(\overbrace{1, ... , 1}^{\text{$i-1$}}, \underline{p_i - 2}, p_{i+1}, ... , p_n)) 
= \frac{1}{2} \{ ( i - 1 ) + \sum_{k=i+1}^{n} p_k$ \}.

By repeating the same computation 
$\frac{1}{2}(p_i - 1)$ times, we have
\begin{align*}
& a_2(P(\overbrace{1, 1, ..., 1}^{\text{$i-1$}}, \underline{p_i}, p_{i+1}, ... , p_n)) 
 - a_2(P(\overbrace{1, 1, ... , 1}^{\text{$i-1$}}, \underline{p_i - 2}, p_{i+1},  ... , p_n)) \\
& = \frac{1}{2} \{ ( i - 1 ) + \sum_{k=i+1}^{n} p_k \}
\quad, ... ,  \\
& a_2(P(\overbrace{1, 1, ... , 1}^{\text{$i-1$}}, \underline{3}, p_{i+1}, ... , p_n))  
 - a_2(P(\overbrace{1, 1, ... , 1} ^{\text{$i-1$}} , \underline{1}, p_{i+1}, ... ,p_n))  \\
& =  \frac{1}{2} \{ ( i - 1 ) + \sum_{k=i+1}^{n} p_k \}.
\end{align*}

By summing up, we obtain 
\begin{align*}
& a_2(P(\overbrace{1, 1, ..., 1}^{\text{$i-1$}}, \underline{p_i}, p_{i+1}, ... , p_n)) 
 - a_2(P(\overbrace{1, 1, ... , 1}^{\text{$i-1$}}, \underline{1}, p_{i+1}, ... , p_n)) \\
& = \frac{1}{4} \{ ( i - 1 ) + \sum_{k=i+1}^{n} p_k \}(p_i - 1).
\end{align*}

By the way, in the case where $p_i < 0$, we set $k_+ = P(\overbrace{1, ... , 1}^{\text{$i-1$}}, \underline{p_i + 2}, p_{i+1}, ... , p_n)$ and $k_- = P(\overbrace{1, ... , 1}^{\text{$i-1$}}, \underline{p_i}, p_{i+1}, ..., p_n)$.

By Proposition \ref{prop:lk}, we have $a_2(P(\overbrace{1, ... , 1}^{\text{$i-1$}}, \underline{p_i + 2}, p_{i+1}, ... , p_n))
- a_2(P(\overbrace{1, ... , 1}^{\text{$i-1$}}, \underline{p_i}, p_{i+1}, ... , p_n))
= \frac{1}{2} \{ ( i - 1 ) + \sum_{k=i+1}^{n} p_k$ \}.

By repeating the same computation 
$- \frac{1}{2}(p_i + 1) + 1$ times, we have
\begin{align*}
& a_2(P(\overbrace{1, 1, ..., 1}^{\text{$i-1$}}, \underline{p_i}, p_{i+1}, ... , p_n)) 
 - a_2(P(\overbrace{1, 1, ... , 1}^{\text{$i-1$}}, \underline{p_i + 2}, p_{i+1}, ... , p_n)) \\
& = - \frac{1}{2} \{ ( i - 1 ) + \sum_{k=i+1}^{n} p_k \}
\quad, ... ,  \\
& a_2(P(\overbrace{1, 1, ... , 1}^{\text{$i-1$}}, \underline{-3}, p_{i+1}, ... , p_n))  
 - a_2(P(\overbrace{1, 1, ... , 1} ^{\text{$i-1$}} , \underline{-1}, p_{i+1}, ... ,p_n))  \\
& =  - \frac{1}{2} \{ ( i - 1 ) + \sum_{k=i+1}^{n} p_k \} \\
& a_2(P(\overbrace{1, 1, ... , 1}^{\text{$i-1$}}, \underline{-1}, p_{i+1}, ... , p_n))  
 - a_2(P(\overbrace{1, 1, ... , 1} ^{\text{$i-1$}} , \underline{1}, p_{i+1}, ... ,p_n))  \\
& =  - \frac{1}{2} \{ ( i - 1 ) + \sum_{k=i+1}^{n} p_k \}.
\end{align*}

By summing up, we obtain 
\begin{align*}
& a_2(P(\overbrace{1, 1, ..., 1}^{\text{$i-1$}}, \underline{p_i}, p_{i+1}, ... , p_n)) 
 - a_2(P(\overbrace{1, 1, ... , 1}^{\text{$i-1$}}, \underline{1}, p_{i+1}, ... , p_n)) \\
& = \frac{1}{4} \{ ( i - 1 ) + \sum_{k=i+1}^{n} p_k \}(p_i - 1).
\end{align*}

Therefore, regardless of whether $p_i > 0$ or $p_i <0$, the cumulative effect remains the same.

The proof is complete.
\end{proof}

\subsection{Proof of Lemma \ref{lemma:even}}\label{subsec32}

\evenl*

\begin{proof}
Proceeding in the same way as in the proof of Lemma \ref{lemma:odd}, we obtain Lemma \ref{lemma:even}.

(1) First, we consider the case where $p_1 > 0$.
Let $k_+ = P(\underline{p_1}, p_2, ... , p_n)$ and $k_- = P(\underline{p_1 - 2}, p_2, ... , p_n)$.

By Proposition \ref{prop:lk}, we have $a_2(P(\underline{p_1}, p_2, ... , p_n))
- a_2(P(\underline{p_1-2}, p_2, ..., p_n)) = \frac{1}{2} \{ (p_1 -1) + \sum_{i=2}^{n} p_i$ \}.

By repeating the same computation  $\frac{1}{2} p_1$ times, we have
\begin{align*}
a_2(P(\underline{p_1}, p_2, ... , p_n))
& - a_2(P(\underline{p_1-2}, p_2, ..., p_n)) = \frac{1}{2} \{ (p_1 -1) + \sum_{i=2}^{n} p_i \}\quad, ... , \\
a_2(P(\underline{2}, p_2, ... , p_n)) 
& - a_2(P(\underline{0}, p_2, ... , p_n)) = \frac{1}{2} ( 1 + \sum_{i=2}^{n} p_i ).
\end{align*}

Here, $P(0, p_2, ... , p_n) \cong
T(2, p_2) \# ... \# T(2, p_n)$, 
hence $a_2(P(0, p_2, ... ,p_n)) = a_2(T(2, p_2)) + ... + a_2(T(2, p_n)) = \frac{1}{8} \sum_{i=2}^{n} (p_i^2 - 1)$.

By summing up, we obtain 
\begin{align*}
a_2(P(p_1, p_2, ... , p_n)) 
& = \frac{1}{4} p_1 \sum_{i=2}^{n} p_{i} + \frac{1}{8} p_1^2 + \frac{1}{8} \sum_{i=2}^{n} (p_i^2 -1)\\
& = \frac{1}{8} \sum_{i=1}^{n} p_i^2 + \frac{1}{4} p_1 \sum_{i=2}^{n} p_{i} - \frac{1}{8} (n-1). 
\end{align*}

In the case where $p_1 < 0$, we set $k_+ = P(\underline{p_1 + 2}, p_2, ... , p_n)$ and $k_- = P(\underline{p_1}, p_2 , ... , p_n)$.

By Proposition \ref{prop:lk}, we have $a_2(P(\underline{p_1 + 2}, p_2, ... , p_n))
- a_2(P(\underline{p_1}, p_2, ..., p_n)) = \frac{1}{2} \{ (p_1 + 1) + \sum_{i=2}^{n} p_i$ \}.

By repeating the same computation 
$- \frac{1}{2} p_1$ times, we have
\begin{align*}
a_2(P(\underline{p_1}, p_2, ... , p_n))
& - a_2(P(\underline{p_1 + 2}, p_2, ..., p_n)) = - \frac{1}{2} \{ (p_1 + 1) + \sum_{i=2}^{n} p_i \}\quad, ... , \\
a_2(P(\underline{-2}, p_2, ... , p_n)) 
& - a_2(P(\underline{0}, p_2, ... , p_n)) = - \frac{1}{2} ( -1 + \sum_{i=2}^{n} p_i ).
\end{align*}

By summing up, we obtain the same cumulative effect.
\\

(2) If $n=1$, then $P=P(p_1)$ is the trivial knot $O$, and hence $a_2(P)=0$. In the following, we assume $n \geq 3$.

First, we consider the case where $p_1 < 0$.
Let $k_+ = P(\underline{p_1}, p_2, ... , p_n)$ and $k_- = P(\underline{p_1 + 2}, p_2, ... , p_n)$.

By Proposition \ref{prop:lk}, we have $a_2(P(\underline{p_1}, p_2, ... , p_n))
- a_2(P(\underline{p_1 + 2}, p_2, ..., p_n)) = \frac{1}{2} \sum_{i=2}^{n} p_i$. 

By repeating the same computation
$-\frac{1}{2} p_1$ times, we have 
\begin{align*}
a_2(P(\underline{p_1}, p_2, ... , p_n))
& - a_2(P(\underline{p_1 + 2}, p_2, ..., p_n)) = \frac{1}{2} \sum_{i=2}^{n} p_i \quad, ... , \\
a_2(P(\underline{-2}, p_2, ... , p_n)) 
& - a_2(P(\underline{0}, p_2, ... , p_n)) = \frac{1}{2} \sum_{i=2}^{n} p_i.
\end{align*}

Here, $P(0, p_2, ... , p_n) \cong
T(2, p_2) \# ... \# T(2, p_n)$, 
hence $a_2(P(0, p_2, ... ,p_n)) = a_2(T(2, p_2)) + ... + a_2(T(2, p_n)) = \frac{1}{8} \sum_{i=2}^{n} (p_i^2 - 1)$.

By summing up, we obtain 
\begin{align*}
a_2(P(p_1, p_2, ... , p_n)) 
& = - \frac{1}{4} p_1 \sum_{i=2}^{n} p_{i} + \frac{1}{8} \sum_{i=2}^{n} (p_i^2 -1)\\
& = \frac{1}{8} \sum_{i=2}^{n} p_i^2 - \frac{1}{4} p_1 \sum_{i=2}^{n} p_{i} - \frac{1}{8} (n-1). 
\end{align*}

In the case where $p_1 > 0$, we set $k_+ = P(\underline{p_1 - 2}, p_2, ... , p_n)$ and $k_- = P(\underline{p_1}, p_2 , ... , p_n)$.

By Proposition \ref{prop:lk}, we have $a_2(P(\underline{p_1 - 2}, p_2, ... , p_n))
- a_2(P(\underline{p_1}, p_2, ..., p_n)) = \frac{1}{2} \sum_{i=2}^{n} p_i$.

By repeating the same computation
$\frac{1}{2} p_1$ times, we have 
\begin{align*}
a_2(P(\underline{p_1}, p_2, ... , p_n))
& - a_2(P(\underline{p_1 - 2}, p_2, ..., p_n)) = - \frac{1}{2} \sum_{i=2}^{n} p_i \quad, ... , \\
a_2(P(\underline{2}, p_2, ... , p_n)) 
& - a_2(P(\underline{0}, p_2, ... , p_n)) = - \frac{1}{2} \sum_{i=2}^{n} p_i.
\end{align*}

By summing up, we obtain the same cumulative effect.

The proof is complete.
\end{proof}

\subsection{Proofs of Theorems \ref{thm:odd} and \ref{thm:even}} \label{subsec33}

\odd*
\begin{proof}
By Proposition \ref{prop:pretzel},
we have $u^{\Delta}(P) = a_2(P)$.
The result then follows directly from Lemma~\ref{lemma:odd}.
\end{proof}

\even*
\begin{proof}
By Proposition \ref{prop:pretzel}, 
we have $u^{\Delta}(P) = a_2(P)$.
The result then follows directly from Lemma~\ref{lemma:even}.
\end{proof}

\section{Proof of Theorem \ref{thm:oddone}}\label{sec4}

\begin{remark}
It is known that there exist pretzel knots for which $u^{\Delta}(P) \ne |a_2(P)|$.
For example:
\[
\begin{array}{c|c|c|c|c}
P & \text{pretzel knot} & a_2(P) & u^{\Delta}(P)\\
\hline
8_2     & P(2,5,1)           &  0 & 2\\
8_5     & P(2,3,3)           & -1 & 3\\
8_{21}  & P(2,-3,-3,1)       &  0 & 2\\
9_8     & P(2,1,1,1,1,-3,1)  &  0 & 2\\
10_{46} & P(2,5,3)           &  0 & 4\\
10_{76} & P(2,1,1,3,3)       & -2 & 2 \text{ or } 4
\end{array}
\]
All of these pretzel knots are pretzel knots of even type.
\end{remark}

It is observed that, for pretzel knots of odd type, $u^{\Delta}(P) = |a_2(P)|$ in the examples considered. In fact, this equality holds for all pretzel knots of odd type with at most $9$ crossings.
Here, we investigate the following theorem (Theorem \ref{thm:oddone}).

\oddone*
\begin{proof}
By Claim \ref{claim:technique},
we have $d_G^{\Delta}(P(-1, p_2, p_3, ... , p_n), P(-1, p_2-2, p_3, ... , p_n)) \leq \frac{1}{2} ( \sum_{i=3}^{n}p_i - 1 )$. 

By repeating the same computation $\frac{1}{2} (p_2 - 1)$ times, we have
\begin{gather*}
d_G^{\Delta}(P(-1, \underline{p_2}, p_3, ... , p_n), P(-1, \underline{p_2-2}, p_3, ... , p_n)) \leq 
\frac{1}{2} ( \sum_{i=3}^{n}p_i - 1 ) 
\quad, ... , \\
d_G^{\Delta}(P(-1, \underline{3}, p_3, ... , p_n), P(-1, \underline{1}, p_3, ... , p_n)) \leq
\frac{1}{2} ( \sum_{i=3}^{n}p_i - 1 ). 
\end{gather*}

Here, $P(-1, 1, p_3, ... , p_n) \cong P(p_3, ... , p_n)$.
Let $P'=P(p_3, ... , p_n)$, then $P'$ is a positive pretzel knot of odd type.
By Theorem \ref{thm:odd}, we have $u^{\Delta}(P')=a_2(P')$.

Then, we obtain 
\begin{align*}
& d_G^{\Delta}(P(-1, p_2, ... , p_n), O) \\ & 
\leq d_G^{\Delta}(P(-1, p_2, p_3, ... , p_n), P(-1, 1, p_3, ... , p_n)) 
+ d_G^{\Delta}(P(-1, 1, p_3, ..., p_n), O) 
\\ &
\leq \frac{1}{4} 
( \sum_{i=3}^{n}p_i - 1 )(p_2 - 1)
+ \frac{1}{4} \sum_{3 \leq i<j }^{n} p_{i}p_{j} +  \frac{1}{8} \{ (n-2) - 1) \}
\\
& = - \frac{1}{4} \sum_{i = 2}^{n} p_{i} + 
\frac{1}{4} \sum_{2 \leq i<j }^{n} p_{i}p_{j} +  \frac{1}{8} (n-1)
\\
& = a_2(P) = |a_2(P)|.
\end{align*}
Therefore, we have $u^{\Delta}(P) \leq |a_2(P)|$.

By Proposition \ref{prop:a2}, we also have $u^{\Delta}(P) \geq |a_2(P)|$. 
Thus, we conclude that $u^{\Delta}(P)=|a_2(P)|$.
The proof is complete.
\end{proof}

\section{$\Delta$-unknotting number and signature}\label{sec5}

It is known that
$u^{\Delta}(K) \geq |\sigma(K)|/2$, where $\sigma(K)$ denotes the signature of a knot $K$ (see \cite{MaN}). 
In this section, we give examples demonstrating the usefulness of the inequality $u^{\Delta}(K) \geq |\sigma(K)|/2$.

\subsection{Case of $P(m,1,1,-3,1)$}
\label{subsec51}

\leavevmode

First, we give an example realizing the equality case of the inequality
$u^{\Delta}(K) \geq |\sigma(K)|/2$.

\begin{remark}
The following pretzel knots satisfy $u^{\Delta}(K)=1$ with at most 9 crossings.
\[
\begin{array}{c|c|c|c}
K & \text{pretzel knot}& |\sigma(K)|/2 & a_2(K) \\
\hline
3_1   & P(1,1,1)           & 1 &  1 \\
4_1   & P(1,1,2)           & 0 & -1 \\
6_2   & P(1,2,3)           & 1 & -1 \\
6_3   & P(2,1,-3,1)        & 0 &  1 \\
7_6   & P(2,1,1,-3,1)      & 1 &  1 \\
7_7   & P(1,1,1,-3,-3)     & 0 & -1 \\
8_{11}& P(3,1,1,-3,1)      & 1 & -1 \\
8_{13}& P(1,1,1,-3,-4)     & 0 &  1 \\
9_{12}& P(4,1,1,-3,1)      & 1 &  1 \\
9_{14}& P(1,1,1,-3,-5)     & 0 & -1 \\
9_{24}& P(3,1,-3,1,2)      & 0 &  1 \\
9_{28}& P(1,1,-3,2,1,-3)   & 1 &  1
\end{array}
\]
\end{remark}

We therefore obtain the following example.
\begin{example}
Let $K = P(m,1,1,-3,1)$ be a nontrivial pretzel knot, where $m$ is a nonzero integer. 
Then, we have $u^{\Delta}(K) = 1$ $\big(=|a_2(K)| \big)$, and
\[
|\sigma(K)|/2 =
\begin{cases}
1 & (m > 0), \\
0 & (m < 0).
\end{cases}
\]
For $m>0$, the knot $P(m,1,1,-3,1)$ satisfies 
$u^{\Delta}(K) = |\sigma(K)|/2$.
\end{example}

\begin{figure}[htbp]
    \centering
    \includegraphics[width=1\linewidth]{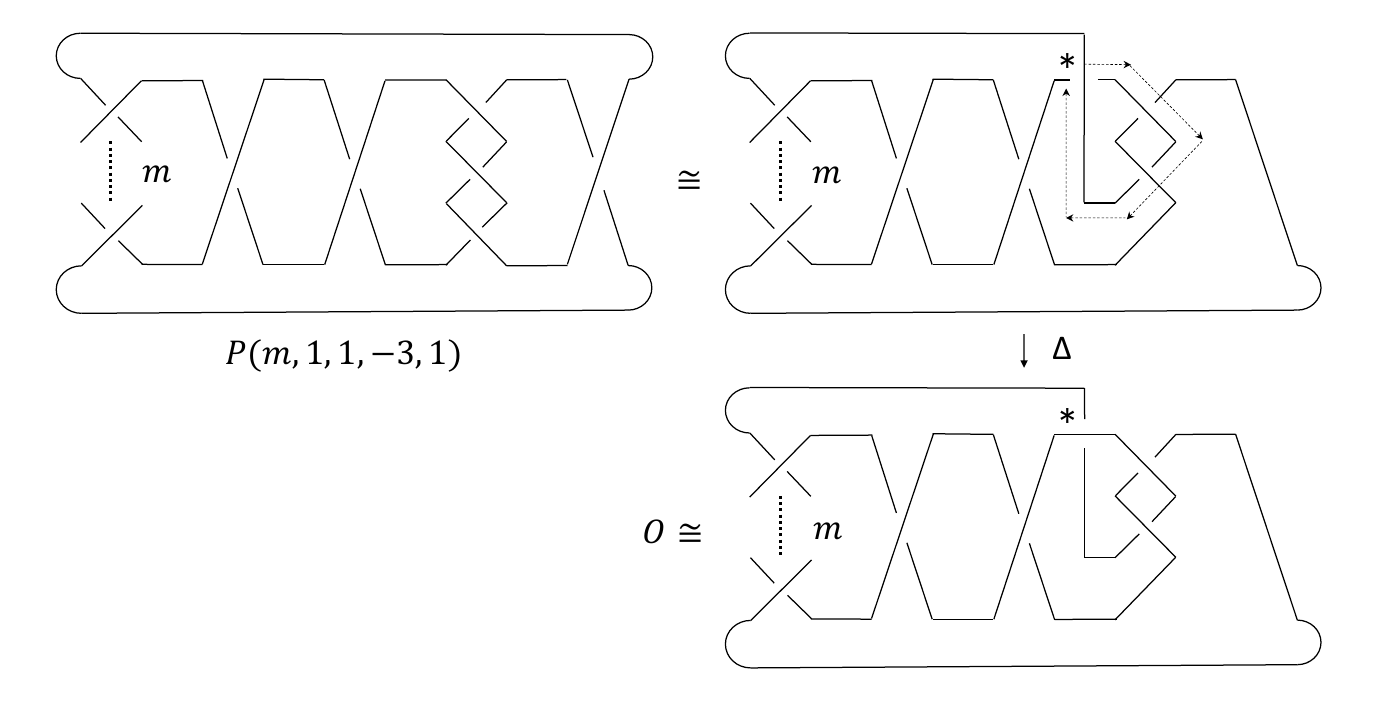}
    \caption{}
    \label{fig:placeholder}
\end{figure}

\subsection{Case of $P(2,3,m)$}
\label{subsec52}

\leavevmode

We give an example demonstrating the usefulness of the inequality $u^{\Delta}(K) \geq |\sigma(K)|/2$.

\begin{example}
Let $K = P(2,3,m)$ be a nontrivial pretzel knot, where $m$ is a positive odd integer. 
Then, we have
$|\sigma(K)|/2 = \frac{1}{2} (m+1)$ and 
$a_2(K) = \frac{1}{8} (m^2-4m-5)$.
\[
\begin{array}{c|c|c|c|c}
K & \text{pretzel knot}& |\sigma(K)|/2 & a_2(K) & u^{\Delta}(K)\\
\hline
6_2      & P(2,3,1)  & 1 & -1 & 1\\
8_5      & P(2,3,3)  & 2 & -1 & 3\\
10_{46}  & P(2,3,5)  & 3 &  0 & 4
\end{array}
\]

Among knots with at most $9$ crossings, $P(2,3,3)$ is the only knot for which 
$u^{\Delta}(K)$ cannot be determined using only $a_2(K)$ ( Proposition \ref{prop:a2} ), but can be determined
by additionally using the inequality
$u^{\Delta}(K) \geq |\sigma(K)|/2$.
\end{example}

\begin{figure}[htbp]
    \centering
    \includegraphics[width=0.8\linewidth]{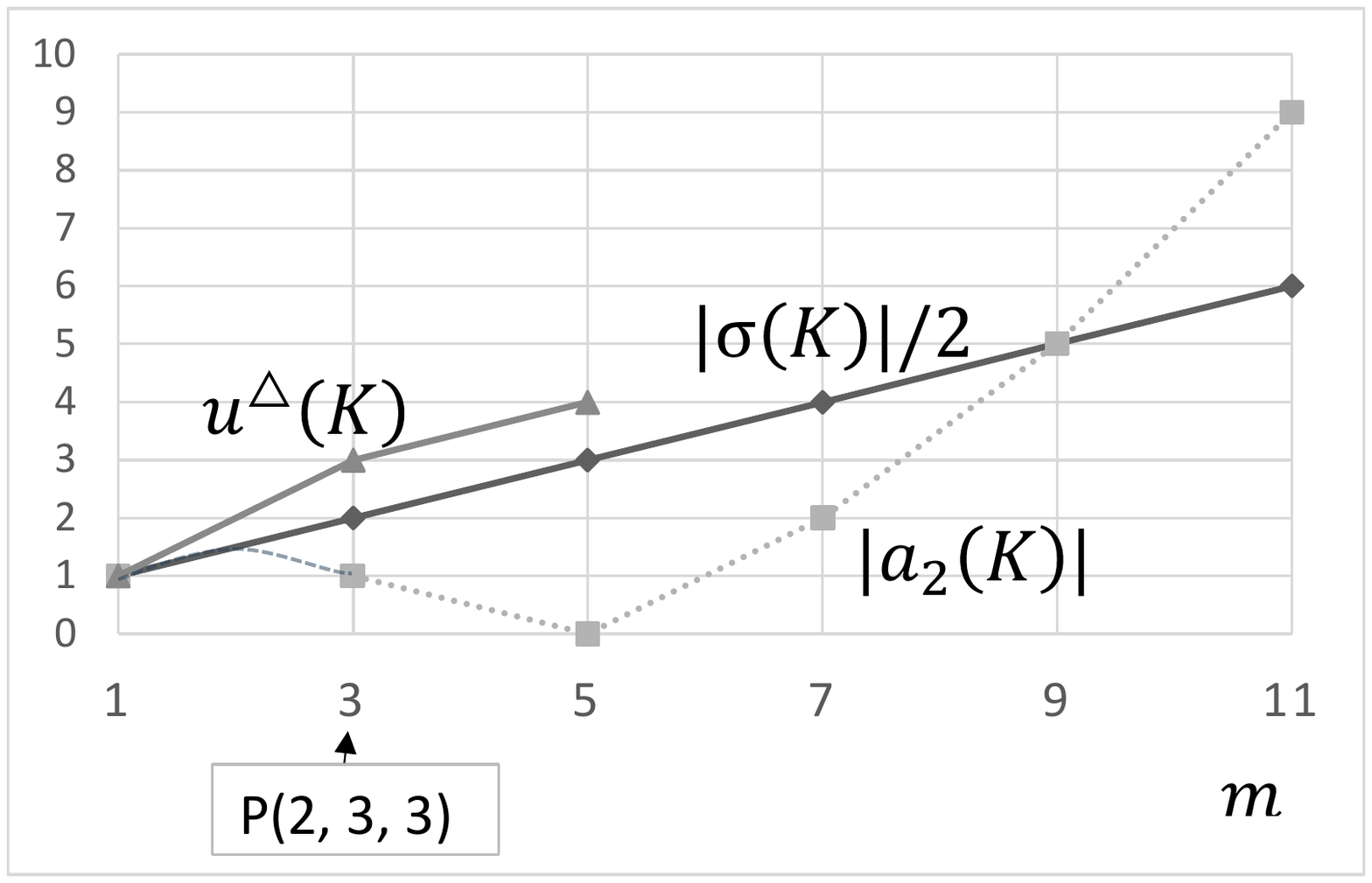}
    \caption{}
    \label{fig:placeholder}
\end{figure}

\section{$\Delta$-unknotting number for positive knots}\label{sec6}

In this section, we present several results and conjectures concerning the $\Delta$-unknotting number for positive knots.

\subsection{$\Delta$-Unknotting Number and Crossing Number}
\label{subsec63}

\leavevmode

The following Theorem is a consequence of Theorem \ref{thm:odd} and
Proposition \ref{prop:torus}.

\deltamax*
\begin{proof}
(1) (i) Let $P=P(p_1, p_2, ... , p_m)$ be a positive pretzel knot of odd type, where $p_i$ is a positive odd integer for $1 \leq i \leq m$ and $m$ is odd.

Positive pretzel knot of odd type is alternating.
Thus, $n = \sum_{i=1}^{m} p_{i}$. 
Then, we have
\[
\sum_{1 \leq i<j }^{m} p_{i}p_{j}
= \frac{1}{2} \{ (\sum_{i=1}^m p_i)^2 - \sum_{i=1}^m p_i^2 \}
= \frac{1}{2}(n^2 - \sum_{i=1}^m p_i^2).
\]

Now we estimate $\sum_{i=1}^m p_i^2$. Since each $p_i$ is a positive odd integer, we have $p_i \ge 1$, hence $p_i^2 \ge 1$.
Therefore, $\sum_{i=1}^m p_i^2 \ge m$.

Substituting this into the previous expression gives
\[
\sum_{1 \leq i<j }^{m} p_{i}p_{j}
\leq \frac{1}{2}(n^2 - m).
\]

By Theorem \ref{thm:odd}, we have
\[
u^{\Delta}(P) = 
\frac{1}{4} \sum_{1 \leq i<j }^{m} p_{i}p_{j} + \frac{1}{8} (m-1)
\leq
\frac{1}{8}(n^2 - m)+ \frac{1}{8} (m-1)
= \frac{1}{8} (n^2-1).
\]

Equality holds if and only if
$p_i^2 = 1$ for all $i$,
that is, $p_i = 1$ for all $i$.

\vspace{1em}
(1)(ii) Let $P=P(p_1, p_2, ... , p_m)$ be a positive pretzel knot of even type $A$, where $p_1$ is a positive even integer, and $p_i$ is a positive odd integer for $2 \leq i \leq m$ and $m$ is even. 

Positive pretzel knot of even type $A$ is alternating.
Thus, $n = \sum_{i=1}^{m} p_{i}$.
Then, we have
\[
\sum_{i=2}^m p_i^2
= (\sum_{i=2}^m p_i)^2- 2\sum_{2 \leq i<j }^{m} p_{i}p_{j}
\leq (\sum_{i=2}^m p_i)^2
=(n-p_1)^2.
\]

By Theorem \ref{thm:even}(1), we have
\begin{align*}
u^{\Delta}(P)
&=
\frac{1}{8} \sum_{i=1}^{m} p_{i}^2
+ \frac{1}{4} p_1 \sum_{i=2}^{m} p_i
- \frac{1}{8} (m-1) \\
&=
\frac{1}{8} ( \sum_{i=2}^{m} p_{i}^2 + p_1^2 )
+ \frac{1}{4} p_1 (n-p_1)
- \frac{1}{8} (m-1) \\
&\leq
\frac{1}{8} \left\{ (n-p_1)^2 + p_1^2 \right\}
+ \frac{1}{4} p_1 (n-p_1)
- \frac{1}{8} (m-1) \\
&=
\frac{1}{8} (n^2 - m + 1) \\
&\leq
\frac{1}{8} (n^2 - 1).
\end{align*}

Equality holds if and only if $m = 2$ and $p_2 = n-p_1$.

\vspace{1em}
(1)(iii) Let $P=P(p_1, p_2, ... , p_m)$ be a positive pretzel knot of even type $B$, where $p_1$ is a negative even integer, and $p_i$ is a positive odd integer for $2 \leq i \leq m$ and $m$ is odd. 

Assume that $n = |p_1| + \sum_{i=2}^{m} p_{i}$. Thus, $n$ is even. 
Then, we have
\[
\sum_{i=2}^m p_i^2
= (\sum_{i=2}^m p_i)^2- 2\sum_{2 \leq i<j }^{m} p_{i}p_{j}
=(n + p_1)^2 - 2\sum_{2 \leq i<j }^{m} p_{i}p_{j}.
\]

By Theorem \ref{thm:even}(2), we have
\begin{align*}
u^{\Delta}(P)
&=
\frac{1}{8} \sum_{i=2}^{m} p_{i}^2 - \frac{1}{4} p_1 \sum_{i=2}^{m} p_i - \frac{1}{8} (m-1) \\
&=
\frac{1}{8} \{ (n + p_1)^2 - 2\sum_{2 \leq i<j }^{m} p_{i}p_{j} \}
- \frac{1}{4} p_1 (n + p_1)
- \frac{1}{8} (m-1) \\
&=
\frac{1}{8} (n^2 - p_1^2 -m + 1- 2\sum_{2 \leq i<j }^{m} p_{i}p_{j}) \\
&\leq
\frac{1}{8} (n^2 - 6n +24).
\end{align*}

Equality holds if and only if $m = 3$,
$p_1 = -2$, and either $p_2 = 3$ or $p_3 = 3$.

\vspace{1em}
(2) It is known that the crossing number of the torus knot $T(2,n)$ is $n$. 
Let $2 \leq p < q$. Then the crossing number of $T(p,q)$ is $(p-1)q$.

We set $n = (p-1)q$. 
Then we compare
\[
\frac{1}{8}(n^2 - 1) = \frac{1}{8}\{(p-1)^2 q^2 - 1\}
\]
and
\[
\frac{1}{24}(p^2 - 1)(q^2 - 1).
\]

A direct computation shows that
\[
3 \{ (p-1)^2 q^2 - 1 \} - (p^2 - 1)(q^2 - 1) \ge 0
\]
for all integers $2 \le p < q$, hence
\[
\frac{1}{8}(n^2-1) \ge \frac{1}{24}(p^2-1)(q^2-1).
\]

Therefore,
\[
u^{\Delta}(T(2,n)) \geq u^{\Delta}(T(p,q)).
\]

Equality holds if and only if $p = 2$ and $q=n$.
\end{proof}

Here, $P(\underbrace{1, 1, ... , 1}_{\text{$n$}}) \cong P(p_1, n-p_1)
\cong T(2,n)$.

\deltamin*
\begin{proof}
Let $P=P(p_1, p_2, ... , p_m)$ be a positive pretzel knot of odd type, where $p_i$ is a positive odd integer for $1 \leq i \leq m$ and $m$ is odd.

Positive pretzel knot of odd type is alternating.
Thus, $n = \sum_{i=1}^{m} p_{i}$. 
Then, we have
\[
\sum_{1 \leq i<j }^{m} p_{i}p_{j}
= \frac{1}{2} \{ (\sum_{i=1}^m p_i)^2 - \sum_{i=1}^m p_i^2 \}
= \frac{1}{2}(n^2 - \sum_{i=1}^m p_i^2).
\]

By Theorem \ref{thm:odd}, we have
\begin{align*}
u^{\Delta}(P) & = 
\frac{1}{4} \sum_{1 \leq i<j }^{m} p_{i}p_{j} + \frac{1}{8} (m-1) \\
& =\frac{1}{8}(n^2 - \sum_{i=1}^m p_i^2)+ \frac{1}{8} (m-1) \\
& \geq \frac{1}{8}\{n^2 - \big((n-m+1)^2 + (m-1) \big) \}
+ \frac{1}{8} (m-1) \\
& = \frac{1}{8}\{n^2 - (n-m+1)^2\} \\
& \geq \frac{1}{2} (n-1)
\end{align*}

Equality holds if and only if
$m=3$, and up to permutation
$(p_1, p_2, p_3) = (1, 1, n-2)$.
\end{proof}

Here, $P(1,1,n-2) \cong C(n-2,2).$

\begin{example}
The following gives pretzel knots of odd type together with the values of $u^{\Delta}(K)$, arranged according to the crossing number $c(K)$.
\[
\begin{array}{c|c|c}
c(K) & \text{pretzel knot} & u^{\Delta}(K) \\
\hline
5 & 5_1 \cong P(1,1,1,1,1) & 3 \\
  & 5_2 \cong P(1,1,3)  & 2 \\
\hline
7 & 7_1 \cong P(1,1,1,1,1,1,1) & 6 \\
  & 7_2 \cong P(5,1,1)  & 3 \\
  & 7_4 \cong P(3,1,3)  & 4 \\
\hline
9 & 9_1 \cong P(1,1,1,1,1,1,1,1,1) & 10 \\
  & 9_2 \cong P(1,1,7)  & 4 \\
  & 9_5 \cong P(1,3,5)  & 6 \\
  & 9_{10} \cong P(-3,-1,-1,-1,-3)  & 8 \\
  & 9_{35} \cong P(3,3,3)  & 7 \\
\end{array}
\]
\end{example}

\begin{example}
The following gives pretzel knots of even type $A$ together with the values of $u^{\Delta}(K)$, arranged according to the crossing number $c(K)$.
\[
\begin{array}{c|c|c}
c(K) & \text{pretzel knot} & u^{\Delta}(K) \\
\hline
7 & 7_1 \cong P(2,5),P(4,3),P(6,1) & 6 \\
  & 7_3 \cong P(1,1,1,4)           & 5 \\
  & 7_5 \cong P(2,1,1,3)           & 4 \\
\hline
9 & 9_1 \cong P(2,7),P(4,5),P(6,3),P(8,1) & 10 \\
  & 9_7 \cong P(-3,-1,-1,-1,-1,-2)  & 5 \\
  & 9_9 \cong P(-4,-1,-1,-3)        & 8 \\
  & 9_{16} \cong P(2,3,1,3)         & 6 \\
\end{array}
\]
\end{example}

\begin{example}
The following gives pretzel knots of even type $B$ together with the values of $u^{\Delta}(K)$, arranged according to the crossing number $c(K)$.
\[
\begin{array}{c|c|c}
c(K) & \text{pretzel knot} & u^{\Delta}(K) \\
\hline
12 & 12_{n242} \cong P(-2, 3 ,7) & 12 \\
   & P(-2, 5, 5) & 11 \\
\end{array}
\]
\end{example}

\begin{example}
The following gives torus knots together with the values of $u^{\Delta}(K)$, arranged according to the crossing number $c(K)$.
\[
\begin{array}{c|c|c}
c(K) & \text{torus knot} & u^{\Delta}(K) \\
\hline
15 & T(2,15) & 28 \\
   & T(4,5)  & 15 \\
\hline
21 & T(2,21) & 55 \\
   & T(4,7)  & 30 \\
\hline
35 & T(2,35) & 153 \\
   & T(6,7)  & 70 \\
\end{array}
\]
\end{example}

The following conjectures are motivated by Theorem \ref{thm:deltamax}.

\conjcrtn*

\conjcrposi*

Since there are no positive knots with crossing numbers 4 or 6, these cases are excluded.
For crossing number 10, $u^{\Delta}(10_{139}) > u^{\Delta}(10_{124})=8$. Here, $10_{124} \cong P(-2,3,5) \cong T(3,5)$.

\begin{remark}
The following table summarizes, for each crossing number, the maximal value of the $\Delta$-unknotting number among prime knots and the knots realizing it.
For prime knots with at most 10 crossings, Conjectures \ref{conj:conjcrtn} and \ref{conj:conjcrposi} hold.
Furthermore, for completeness, if the crossing number is 11,
$|a_2(K)|$ is maximized by $T(2,11)$, and the maximum value is 15.
\[
\begin{array}{c|c|c|c|c|c|c}
c(K) & K & u^{\Delta}(K) & \text{positivity} & \text{alternation} & \text{torus knot} & \text{pretzel knot} \\
\hline
3  & 3_1      & 1  & positive & alternating & T(2,3) & P(1,1,1) \\
4  & 4_1      & 1  & --       & alternating & -- & P(1,1,2) \\
5  & 5_1      & 3  & positive & alternating & T(2,5) & P(1,1,1,1,1) \\
6  & 6_1      & 2  & --       & alternating & -- & -- \\
7  & 7_1      & 6  & positive & alternating & T(2,7) & P(1,1,1,1,1,1,1) \\
8  & 8_{19}   & 5  & positive & --          & T(3,4) & P(3,3,-2) \\
9  & 9_1      & 10 & positive & alternating & T(2,9) & P(1,1,1,1,1,1,1,1,1) \\
10 & 10_{139} & 9  & positive & --          & -- & -- \\
\end{array}
\]
\end{remark}

\vspace{1em}
\subsection{Positive Knots with $\Delta$-Unknotting Number One}
\label{subsec61}

\leavevmode

The following theorem is a consequence of Theorems~\ref{thm:odd}, ~\ref{thm:even}, 
and Proposition~\ref{prop:torus}.

\deltaone*
\begin{proof}
(1) By Theorems~\ref{thm:odd} and~\ref{thm:even}, the positive pretzel knots with $\Delta$-unknotting number one are $P(1,1,1)$, $P(2,1)$, or $P(-2,1,1)$.

(2) By Proposition~\ref{prop:torus}, the torus 
knots with $\Delta$-unknotting number one are $T(p,q)$ with $(|p|, |q|)=(2, 3)$ or $(3, 2)$. 

The proof is complete.
\end{proof}

The following conjecture is motivated by Theorem \ref{thm:deltaone}.

\conjpkdone*

\begin{remark}
For prime knots with at most 10 crossings, Conjecture \ref{conj:conjpkdone} holds.
\end{remark}

Since there is no almost positive knot of
$4$-genus (or unknotting number) one (see \cite{Ta}) and the $\Delta$-unknotting number is at least the $4$-genus (see \cite{MaN}), we obtain the following corollary.

\apkone*

\vspace{1em}
\subsection{Positive Knots with $u^{\Delta}(K) = |a_2(K)|$}
\label{subsec62}

\leavevmode

It is known that if $K$ is a torus knot, 
a positive pretzel knot, or a positive 3-braids, then $u^{\Delta}(K) = |a_2(K)|$.
This leads us to propose the following conjectures (Conjectures \ref{conj:conjpd} and \ref{conj:conjdd}).

\conjpd*

\conjdd*

Conjecture \ref{conj:conjpd} concerns the positivity of knots, whereas 
Conjecture \ref{conj:conjdd} concerns the positivity of minimal crossing diagrams.

\begin{remark}
For prime knots with at most 10 crossings, Conjectures \ref{conj:conjpd} and \ref{conj:conjdd} hold.
\end{remark}

For Conjecture \ref{conj:conjpd}, we verified that all positive knots with at most 10 crossings satisfy $u^{\Delta}(K) = |a_2(K)|$.

Similarly, for Conjecture \ref{conj:conjdd}, we verified that, for all knots $K$ with at most 10 crossings, if $u^{\Delta}(K) \neq |a_2(K)|$, then no minimal crossing diagram of $K$ is positive. In this verification, we examined the diagrams in \cite{Kaw}.

\section*{Acknowledgments}
The author would like to thank Professor Makoto Sakuma for his valuable advice and continuous support.

The author would also like to thank Professors Yoshiaki Uchida and Kouki Taniyama for pointing out relevant literature.

Finally, the author would like to thank his family for their support and understanding throughout this work.







\begin{thebibliography}{99}

\bibitem{DaM}
R. Díaz and P. M. G. Manchón, \textit{Pretzel knots up to nine crossings}, Topology and its Applications. \textbf{339} (2023), Article 108583. 


\bibitem{Kau} 
L.H. Kauffman, \textit{On knots}, Ann. of Math. Studies, vol.115,Princeton Univ. Press, 1990. 

\bibitem{Kaw}
A. Kawauchi, \textit{A Survey of Knot Theory},
Birkhäuser Verlag, Basel, 1996.

\bibitem{MaN} 
H. Murakami and Y. Nakanishi, \textit{On a certain move generating link-homology}, Math. Ann. \textbf{284} (1989),  75--89. 

\bibitem{NaNaU} 
K. Nakamura, Y. Nakanishi and Y. Uchida, \textit{Delta-unknotting number for knots}, J. Knot Theory Ramifications \textbf{7} (1998), 639--650. 



\bibitem{Naka3} 
K. Nakamura, \textit{Delta-unknotting number for two-bridge knots}, arXiv:2512.22970 [math.GT], 2025.

\bibitem{NaYa} 
Y. Nakanishi and M. Yamada, \textit{On Turk's head knots}, Kobe J. Math. \textbf{17} (2000), 119--130. 

\bibitem{Oka1} 
M. Okada, \textit{Delta-unknotting operations and the second coefficient of the Conway polynomial}, J. Math. Soc. Japan \textbf{42} (1990),  713--717. 



\bibitem{Ta}
K. Tagami, \textit{The Rasmussen invariant, four-genus and three-genus of an almost positive knot are equal}, Canadian Mathematical Bulletin \textbf{57} (2014), 431--438.

\bibitem{Tani} 
K. Taniyama, \textit{Pairs of knot invariants}, J. Knot Theory Ramifications \textbf{33} (2024), 2450030. 

\bibitem{Tani2} 
K. Taniyama, \textit{Unknotting numbers of diagrams of a given nontrivial knot are unbounded}, J. Knot Theory Ramifications \textbf{18} (2009), 1049--1063. 
\end{thebibliography}


\end{document}